\documentclass[preprint,12pt]{elsarticle}
\biboptions{sort&compress}
\usepackage{geometry}
\makeatletter\let\ps@pprintTitle\makeatother
\usepackage{amssymb,tikz}
\usetikzlibrary{calc}
\usepackage{graphicx,url}
\usepackage{amsmath}
\usepackage{booktabs} 
\usepackage{amsthm}
\usepackage{fontawesome5}
\usepackage{cleveref}
\crefformat{equation}{(#2#1#3)}
\newcounter{statement}[section]

\newtheorem{theorem}[statement]{Theorem}
\newtheorem{lemma}[statement]{Lemma}
\newtheorem{proposition}[statement]{Proposition}
\newtheorem{corollary}[statement]{Corollary}
\newtheorem{remark}[statement]{Remark}
\theoremstyle{definition}
\newtheorem{definition}[statement]{Definition}

\usepackage{algorithm}
\usepackage{algpseudocode} 
\numberwithin{equation}{section}

\DeclareMathOperator*{\argmin}{argmin}
\newcommand{\bF}{\mathbf{F}}
\newcommand{\bp}{\mathbf{p}}
\newcommand{\M}{\mathcal{M}}
\newcommand{\dd}{\mathrm{d}}
\newcommand{\bq}{\mathbf{q}}
\newcommand{\bd}{\mathbf{d}}
\newcommand{\Sph}{\mathbb{S}}
\newcommand{\btheta}{\boldsymbol{\theta}}
\newcommand{\bdelta}{\boldsymbol{\delta}}
\newcommand{\R}{\mathbb{R}}
\usepackage{enumitem}
\begin{document}
\begin{frontmatter}
\title{Point Feature Descriptor via Directional\\
  Partition of Unity on Maps}
\author{Phan Thanh An$^{1,3}$ and Dau The Phiet$^{2,3,\text{\faIcon{envelope}}}$
\\[1pt]}

\address{$^1$Institute of Mathematical and Computational Sciences and Faculty of Applied Science, Ho Chi Minh City University of Technology, 268 Ly Thuong Kiet, Dien Hong Ward, Ho Chi Minh City, Vietnam}
\address{$^2$Department of Applied Mathematics, Faculty of Applied Science, Ho Chi Minh City University of Technology, 268 Ly Thuong Kiet, Dien Hong Ward, Ho Chi Minh City, Vietnam}
\address{$^3$Vietnam National University Ho Chi Minh City, Linh Xuan Ward, Ho Chi Minh City,  Vietnam}

\address{$^\text{\faIcon{envelope}}$ Corresponding author}
\begin{keyword}
Descriptor  integral\sep 
 Fréchet differentiability\sep GPS-free map-based localization\sep Gram matrix\sep 
  Lipschitz stability\sep  lower semicontinuity.

\textit{MSC2020:} 28A25; 68T45; 52A30; 49J53; 26B35
\end{keyword}

\begin{abstract}
We develop a functional-analytic framework for smooth directional point descriptors in GPS-free map-based localization.
Given a query point $\mathbf{p}$ in a map $\mathcal{M} \subset \mathbb{R}^d$, the descriptor integrates an environment signal against a partition-of-unity weight family built from a softmax kernel, yielding a $\mathcal{C}^\infty$ alternative to hard angular binning.
Our main contributions are: (i) a totality theorem showing that the associated linear functionals form a total family in $L^2(\mathbb{S}^{d-1})$, establishing asymptotic injectivity of the descriptor map; and (ii) a descriptor-induced seminorm $|f|_{\mathcal{D},n} = \|P_n f\|_{L^2}$, identified via the Gram matrix of the weights, which satisfies a Parseval-type identity $|f|_{\mathcal{D},n} \to \|f\|_{L^2(\mathbb{S}^{d-1})}$ as $n \to \infty$.
Complementary results include Fréchet differentiability, lower semicontinuity under occlusion, and explicit Lipschitz stability bounds with constants depending on the kernel and temperature.
These properties underpin a localization theory in which the descriptor grid enables nearest-neighbor position recovery with a certifiable static error bound, while robot motion generates an observability Gramian whose smallest eigenvalue controls a dynamic error bound and generically resolves symmetry-induced ambiguities that persist under single-observation matching.
\end{abstract}
\end{frontmatter}
\section{Introduction}
\label{sec:introduction}

\subsection{Motivation}

Autonomous navigation without reliance on global positioning systems requires a robot to determine its location solely from onboard sensor measurements and a prebuilt map of the environment (see~\cite{Ouyang2024, An2026} ). 
Central to this capability is the ability to match a local observation -- a laser scan, a depth image, or a point cloud -- against a stored representation of the map.
This matching problem is fundamentally one of \emph{point description}:
given a point $\mathbf{p}$ in the map, one seeks a vector $\mathbf{F}(\mathbf{p}) \in \mathbb{R}^n$ that is both \emph{discriminative} (distinct points yield distinct descriptors) and \emph{stable} (small displacements of $\mathbf{p}$ produce small changes in $\mathbf{F}(\mathbf{p})$).

Classical descriptors such as FPFH~\cite{rusu2009fpfh} and SHOT~\cite{tombari2010shot} encode local geometry by accumulating statistics into fixed angular bins.
While effective in practice, hard-binning strategies suffer from the \emph{bin-boundary effect}: a small perturbation in viewpoint or sensor noise can shift a measurement from one bin to the adjacent one, producing a discontinuous jump in the descriptor. 
This discontinuity breaks gradient-based pipelines and makes rigorous analysis of stability and completeness difficult.

A line of work addresses smoothness by replacing hard bins with soft assignment kernels~\cite{fisher2007,vlad,softshotref}, including constructions based on the von Mises-Fisher (vMF) distribution~\cite{fisher1953,mardia2000}.
However, these works focus primarily on empirical performance; a rigorous functional-analytic treatment of the information content, stability, and localization properties of smooth directional descriptors has, to our knowledge, not been carried out.

\subsection{Key Idea and Distinction from Prior Work}

This paper develops such a treatment. We observe that any smooth directional descriptor of the form
\[
  \phi_i(\mathbf{p}) = \int_{\mathbb{S}^{d-1}}
  f_{\mathbf{p}}(\boldsymbol\theta)w_i(\boldsymbol\theta)
  \mathrm{d}\sigma(\boldsymbol\theta)
\]
is entirely determined by the choice of weight functions $\{w_i\}_{i=1}^n$.
We impose one structural condition on the weights -- the \emph{partition of unity} (PoU):
\[
  \sum_{i=1}^n w_i(\boldsymbol\theta) = 1, \quad w_i \geq 0,
\]
and show that this single constraint unlocks a complete functional-analytic theory.

For the weights we adopt the softmax kernel with temperature $\varepsilon > 0$, whose kernel coincides with the vMF distribution~\cite{fisher1953,mardia2000}:
\[
  w_i(\boldsymbol\theta) =
  \frac{\exp(\mathbf{d}_i\cdot\boldsymbol\theta/\varepsilon)}
   {\sum_j \exp(\mathbf{d}_j\cdot\boldsymbol\theta/\varepsilon)}.
\]
The use of a vMF-type kernel for soft assignment is not new.
What \emph{is} new is the analytical framework built on top of it:
\begin{enumerate}[label=\textup{(\arabic*)}]
  \item[\rm (a)] The PoU constraint implies that $\sum_i \phi_i(\mathbf{p}) = \int f_\mathbf{p}\mathrm{d}\sigma$, the \emph{zeroth moment} of the signal, independently of the choice of basis or temperature.
The descriptor thus decomposes the signal into a basis-free amplitude component and a directional shape component.
  \item[\rm (b)] The descriptor map $f \mapsto \bF(f)$ is a bounded linear operator whose \emph{Gram matrix} 
  $(\mathbf{G}_n)_{ij} = \langle w_i, w_j\rangle_{L^2}$
gives rise to the descriptor-induced seminorm
\[|f|_{\mathcal{D},n} = \|\bF(f)\|_{\mathbf{G}_n^{-1}} = \|P_n f\|_{L^2},\]
where $P_n$ is the $L^2$-orthogonal projection onto $\mathrm{span}\{w_i\}$.
As $n \to \infty$, this seminorm satisfies a \emph{Parseval-type identity}: $|f|_{\mathcal{D},n} \to \|f\|_{L^2}$.
  \item[\rm (c)] The family $\{L_i^{(n)} : f \mapsto \phi_i^{(n)}\}$ is \emph{total} in $L^2(\mathbb{S}^{d-1})$: the descriptor asymptotically separates all distinct signals, providing a functional-analytic foundation for discriminability.
  \item[\rm (d)] The Fréchet differentiability of $\mathbf{F}(\mathbf{p})$ in the map point $\mathbf{p}$ enables the construction of an \emph{observability Gramian} $\mathcal{W}(0,T) = \int_0^T J_\bF^\top J_\bF\mathrm{d}t$ along robot trajectories, linking descriptor geometry to classical observability theory and yielding explicit, certifiable localization error bounds.
\end{enumerate}
To our knowledge, points~(2)--(4) have not previously appeared in the directional descriptor literature.

\subsection{Main Contributions}

\begin{enumerate}[label=\textup{(\roman*)}]
  \item \textbf{Analytical framework.}
We provide a unified functional-analytic treatment of partition-of-unity directional descriptors. Central results include the zeroth-moment identity (energy preservation), Fréchet differentiability with explicit Jacobian formula, lower semicontinuity under occlusion, and Lipschitz stability with constants expressed in terms of the kernel and temperature.

  \item \textbf{Gram norm and Parseval identity.}
We introduce the descriptor-induced seminorm 
$|f|_{\mathcal{D},n} := (\bF(f)^\top \mathbf{G}_n^{-1}\bF(f))^{1/2}$, identify it as the $L^2$-norm of the orthogonal projection of $f$ onto $\mathrm{span}\{w_i\}$, and prove that it converges to $\|f\|_{L^2}$ -- a Parseval-type identity that does not hold for hard-bin descriptors.
This norm also induces a pseudo-metric on the map that is locally bi-Lipschitz equivalent to the Euclidean metric.

  \item \textbf{Asymptotic completeness.}
We prove that the associated family of linear functionals is total in $L^2(\mathbb{S}^{d-1})$: as the directional basis becomes dense and the temperature decreases, the descriptor asymptotically preserves all information in the signal. The proof proceeds via a PoU interpolation argument and is substantially different from standard frame-theoretic completeness proofs.

  \item \textbf{Localization theory.}
We formalize GPS-free localization as a nearest-neighbor problem on a descriptor grid and derive a static error bound in terms of the local injectivity radius $\sigma_{\min}(J_\bF(\bp))$. For a robot moving with known velocity (odometry), we prove a dynamic error bound controlled by the observability Gramian, with explicit conditions under which trajectory information resolves static ambiguities that arise in symmetric environments.
\end{enumerate}

\subsection{Structure of the Paper}
The rest of the paper is structured as follows. \Cref{sec:preliminaries} fixes notation and standing assumptions used throughout. \Cref{sec:formulation} introduces the four-step descriptor construction.
\Cref{sec:pou} defines the softmax PoU weights and establishes their regularity and continuity with respect to the directional basis.
\Cref{sec:properties} develops the analytical properties of the descriptor, organized into three groups: signal-space properties (energy, completeness, Gram norm), stability under basis perturbation (continuity, Lipschitz bounds), and map-space properties (continuity, lower semicontinuity, Fréchet differentiability).
\Cref{sec:localization} develops the localization theory: static error bounds, dynamic Gramian bound, and generic resolution of symmetry.
\section{Notation and Preliminaries}
\label{sec:preliminaries}

Throughout the paper we work in $\R^d$ with $d \in \{2, 3\}$.
\Cref{tab:notation} summarizes the notation used.

\begin{table}[H]
\centering
\caption{Summary of notation.}
\label{tab:notation}
\begin{tabular}{ll}
\toprule
Symbol & Meaning \\
\midrule
$d$ & Ambient dimension ($2$ or $3$) \\
$\M \subset \R^d$ & Map (point cloud or occupancy representation) \\
$\bp \in \M$ & Query point \\
$\Sph^{d-1}$ & Unit sphere in $\R^d$ \\
$\btheta \in \Sph^{d-1}$ & Unit direction vector \\
$\dd\sigma(\btheta)$ & Standard surface measure on $\Sph^{d-1}$ \\
$f : \Sph^{d-1} \to \R$ & Environment signal observed from $\bp$ \\
$n$ & Number of directional bases \\
$\{\bd_i\}_{i=1}^{n}$ & Set of $n$ unit vectors on $\Sph^{d-1}$ \\
$w_i : \Sph^{d-1} \to \R_{\geq 0}$ & $i$-th partition-of-unity weight function \\
$\phi_i(\bp)$ & $i$-th feature component of point $\bp$ \\
$\bF(\bp) \in \R^n$ & Feature vector of point $\bp$ \\
\bottomrule
\end{tabular}
\end{table}
\section{Descriptor Construction}\label{sec:formulation}
This section introduces the proposed descriptor construction. The construction starts from a local observation of the environment
around a map point $\bp\in\M$ and proceeds through three ingredients: an environment signal defined on the unit sphere, a directional basis, and a family of smooth partition-of-unity weights. These elements are then combined to produce a finite-dimensional feature vector that encodes the local geometry surrounding $\bp$.

\begin{definition}[Environment signal]
\label{def:signal}
Let $\bp \in \M$. The \emph{environment signal} $f_{\bp} : \Sph^{d-1} \to \R$ is a scalar function that encodes a measurable property of the map along direction $\btheta$ from $\bp$. Typical instantiations include:
\begin{itemize}[nosep]
  \item \textbf{Range}: $f_{\bp}(\btheta) = $ distance to the nearest surface point in direction $\btheta$;
  \item \textbf{Reflectance intensity}: $f_{\bp}(\btheta) = $ LiDAR return intensity;
  \item \textbf{Surface gradient}: $f_{\bp}(\btheta) = \|\nabla s(\bp + r\btheta)\|$ evaluated at the first surface intersection.
\end{itemize}
When the context is clear we write $f$ instead of $f_{\bp}$.
\end{definition}

\begin{definition}[Directional basis]
\label{def:basis}
A \emph{directional basis} of order $n$ is a set of $n$ unit vectors
\[
  \mathcal{D} = \{\bd_1, \bd_2, \ldots, \bd_n\} \subset \Sph^{d-1}.
\]
The vectors may be arranged uniformly (angular spacing $2\pi / n$ in 2D, or a Fibonacci lattice in 3D) or adapted to the local geometry of $\M$.
\end{definition}

Given an environment signal and a directional basis, the next step is to distribute directional information smoothly among the basis vectors. To this end, we introduce a family of partition-of-unity weights on the sphere (see Definition~\ref{def:pou_weight}), which will replace the hard angular bins used in classical histogram descriptors.

\begin{definition}[Feature component]
\label{def:component}
Given a directional basis $\mathcal{D}$ and the associated PoU weights
$\{w_i\}$. The \emph{$i$-th feature component}
of point $\bp \in \M$ is given by
\begin{equation}
  \phi_i(\bp) =  \int_{\Sph^{d-1}}  f_{\bp}(\btheta)w_i(\btheta)  \dd\sigma(\btheta),  \qquad i = 1, \ldots, n.
  \label{eq:component}
\end{equation}
In two dimensions ($d = 2$), $\Sph^1 \cong [0, 2\pi)$ and \eqref{eq:component} reduces to $\phi_i(\bp) = \int_{0}^{2\pi} f_{\bp}(\theta) w_i(\theta)\dd\theta$.
\end{definition}


\begin{definition}[Point feature vector]
\label{def:feature}
The \emph{feature vector} of point $\bp$ is the concatenation of all feature components:
\begin{equation}
  \boxed{
  \bF(\bp)  = \bigl[\phi_1(\bp),\phi_2(\bp),\ldots,\phi_n(\bp)\bigr]^{\!\top}   \in \R^n.
  }
 \label{eq:feature}
\end{equation}
\end{definition}

\section{Partition-of-Unity Weights}\label{sec:pou}
We define the weight functions $\{w_i\}$ that appear in definition~\ref{def:component} and establish their regularity and continuity with respect to the directional basis.

\begin{definition}[Partition-of-unity weight]
\label{def:pou_weight}
Let $\mathcal{D} = \{\bd_1,\ldots,\bd_n\} \subset \Sph^{d-1}$ be a directional basis and let $\psi \in \mathcal{C}^k([-1,1]),  k\ge 1$ be a non-negative function such that
\begin{equation}
  Z(\btheta) := \sum_{j=1}^{n}\psi\!\left(\btheta^{\!\top}\bd_j\right)  > 0,
  \qquad \forall\btheta\in\Sph^{d-1}.
  \label{eq:denominator}
\end{equation}
The \emph{PoU weight} associated with $\bd_i$ is
\begin{equation}
  w_i(\btheta)  :=  \frac{\psi\!\left(\btheta^{\!\top}\bd_i\right)}{Z(\btheta)},  \qquad \btheta \in \Sph^{d-1}.
  \label{eq:pou_weight}
\end{equation}
Throughout this paper we adopt the \emph{softmax kernel} with temperature $\varepsilon > 0$, whose kernel coincides with that of the von Mises--Fisher distribution on $\Sph^{d-1}$ \cite{fisher1953,mardia2000}:
\begin{equation}
  \psi_\varepsilon(t) := \exp\!\left(\frac{t}{\varepsilon}\right),
  \qquad t\in[-1,1],
  \label{eq:softmax_kernel}
\end{equation}
so that \eqref{eq:pou_weight} becomes explicitly
\begin{equation}
  w_i(\btheta)  =  \frac{\exp\!\left(\dfrac{\btheta ^{\!\top} \bd_i}{\varepsilon}\right)}   {\displaystyle\sum_{j=1}^{n}\exp\!\left(\dfrac{\btheta ^{\!\top}\bd_j}{\varepsilon}\right)}.
  \label{eq:softmax_weight}
\end{equation}
\end{definition}
The parameter $\varepsilon$ controls how strongly each basis direction influences nearby observations. The following remark clarifies the relationship between the proposed soft assignment and the hard directional partition underlying traditional descriptors.
\begin{remark}[Role of the temperature $\varepsilon$]
\label{rem:temperature}
The parameter $\varepsilon > 0$ controls the concentration of each weight around its preferred direction $\bd_i$.
To make this precise, define the \emph{strict dominance region} of $\bd_i$:
\begin{equation}
  A_i :=   \bigl\{\btheta \in \Sph^{d-1}  \big|
 \btheta^{\!\top}\bd_i > \btheta^{\!\top}\bd_j,
  \quad \forall j \neq i\bigr\},
  \label{eq:dominance_region}
\end{equation}
so that $\{A_i\}_{i=1}^n$ is a Borel partition of $\Sph^{d-1}$ up to a set of $\sigma$-measure zero (the boundaries $\btheta^{\!\top}\bd_i = \btheta^{\!\top}\bd_j$ lie on great-circle hyperplanes and are therefore $\sigma$-negligible).
Then, as $\varepsilon \to 0$,
\begin{equation}
  w_i(\btheta) \longrightarrow \mathbf{1}_{A_i}(\btheta)
  \qquad \sigma\text{-a.e.},
  \label{eq:hard_limit}
\end{equation}
and by the Dominated Convergence Theorem \cite{rudin1987} the convergence also holds in $L^p(\Sph^{d-1})$ for every $1 \leq p < \infty$.
The limiting collection $\{\mathbf{1}_{A_i}\}$ is itself a \emph{measurable partition of unity} on $\Sph^{d-1}$ — the measure-theoretic counterpart of the smooth PoU — recovering the hard directional assignment used in classical histogram descriptors. Conversely, as $\varepsilon \to +\infty$, all weights flatten to $1/n$, producing uniform averaging over all directions.
At intermediate values of $\varepsilon$, the softmax PoU interpolates smoothly between these two extremes, which is the regime of interest for the present descriptor.
\end{remark}

\begin{proposition}[PoU and regularity]
\label{prop:pou}
Let $\psi\in C^k([-1,1])$ satisfy \eqref{eq:denominator}. Then the weights $\{w_i\}_{i=1}^n$ defined in \eqref{eq:pou_weight} satisfy:
\begin{enumerate}[label=\textup{(\roman*)}]
  \item $w_i(\btheta) \geq 0$ for all $\btheta \in \Sph^{d-1}$,
  \item $\displaystyle\sum_{i=1}^{n} w_i(\btheta) = 1$
for all $\btheta \in \Sph^{d-1}$, \label{eq:pou}
  \item $w_i \in \mathcal{C}^k(\Sph^{d-1})$.
\end{enumerate}
\end{proposition}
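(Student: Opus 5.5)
The three claims follow directly from the quotient structure of \eqref{eq:pou_weight}, so I will verify them in the order stated.

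For (i), $\psi\ge 0$ on $[-1,1]$, and for $\btheta,\bd_i\in\Sph^{d-1}$ Cauchy--Schwarz gives $\btheta^{\!\top}\bd_i\in[-1,1]$. Hence $\psi(\btheta^{\!\top}\bd_i)$ is defined and nonnegative. By \eqref{eq:denominator} the denominator $Z(\btheta)$ is strictly positive, so $w_i(\btheta)\ge 0$ is well defined.

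For (ii), I sum \eqref{eq:pou_weight} over $i$. The numerators add up to exactly $Z(\btheta)$, so $\sum_i w_i(\btheta)=Z(\btheta)/Z(\btheta)=1$ for every $\btheta$.

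For (iii), I regard $\Sph^{d-1}$ as a smooth embedded submanifold of $\R^d$. The map $\btheta\mapsto \btheta^{\!\top}\bd_i$ is the restriction of a linear map, hence $C^\infty$, with values in $[-1,1]$. The one point needing care is that $\psi$ is only given on the closed interval $[-1,1]$, and the inner product attains the endpoints $\pm1$ at $\btheta=\pm\bd_i$.
\begin{itemize}
\item I interpret $C^k([-1,1])$ in the usual sense: one-sided derivatives at the endpoints, continuous up to the boundary. By a Whitney or Seeley extension (for finite $k$ one can reflect using a polynomial-type combination), $\psi$ extends to some $\tilde\psi\in C^k(\R)$.
\item For the softmax kernel \eqref{eq:softmax_kernel} no extension is needed, since $\psi_\varepsilon$ is entire and everything is $C^\infty$.
\end{itemize}
With this extension, the chain rule shows each $\btheta\mapsto\tilde\psi(\btheta^{\!\top}\bd_j)$ is $C^k$ on a neighborhood of the sphere in $\R^d$. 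Its restriction to $\Sph^{d-1}$ is therefore $C^k$. Consequently $Z$ is $C^k$ as a finite sum. It is positive on the compact sphere, and by continuity it stays positive on a neighborhood. On that neighborhood $1/Z$ is $C^k$, as the composition of $t\mapsto 1/t$ on $(0,\infty)$ with $Z$. The product $w_i=\psi(\btheta^{\!\top}\bd_i)\cdot Z^{-1}$ is then $C^k$ by the Leibniz rule.

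The only real obstacle is this boundary regularity of $\psi$ at $t=\pm1$. I expect to handle it with the extension remark, or to note that in the paper's standing softmax choice it is vacuous.
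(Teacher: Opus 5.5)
Your proposal is correct and follows essentially the same route as the paper: (i) and (ii) come straight from the quotient structure with $Z>0$, and (iii) from the chain rule applied to $\psi(\btheta^{\!\top}\bd_i)$ together with positivity of $Z$. The only differences are minor. The paper uses compactness to get $Z\ge Z_{\min}>0$, whereas you only need $Z>0$ on an open neighborhood of the sphere. Your extension of $\psi$ beyond $[-1,1]$ is a welcome addition, since it justifies the chain rule at $\btheta=\pm\bd_i$, where the inner product reaches the endpoints --- a point the paper passes over silently.
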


\begin{proof}
(i) is obvious.

(ii) is immediate by summing \eqref{eq:pou_weight} over $i$ and canceling $Z(\btheta) > 0$.

For (iii): the map $\btheta \mapsto \btheta^{\!\top}\bd_i$ is $\mathcal{C}^\infty$ on $\Sph^{d-1}$; by the regularity of $\psi$, the composition $\rho_i(\btheta) := \psi(\btheta^{\!\top}\bd_i)$ is $\mathcal{C}^k$, and so is $Z = \sum_j \rho_j$.

The compactness of $\Sph^{d-1}$ and the continuity of $Z$ implies $Z \geq Z_{\min} := \inf_{\Sph^{d-1}} Z(\btheta) > 0$, the quotient $w_i = \rho_i / Z$ is $\mathcal{C}^k$.
\end{proof}

 
Beyond smoothness with respect to the observation direction $\btheta$, it is important to understand how the weights vary when the directional basis itself is modified. 
This question is particularly relevant when the basis is optimized from data or adapted to the local geometry of the map. 
\medskip
 
We first establish a key lemma on the smoothness of the denominator.
 
\begin{lemma}[Smoothness and positivity of the denominator]
\label{lem:denom}
Suppose $\psi \in \mathcal{C}^k$ and each $\bd_i(\tau)$ is $\mathcal{C}^1$ in $\tau$, with $\mathcal{D}(\tau)$ being $\psi$-covering for all $\tau$ in a neighborhood $U$ of $\tau_0$.
Then:
\begin{enumerate}[label=\textup{(\roman*)}]
  \item $Z(\btheta;\tau) \in \mathcal{C}^1(U)$ for each fixed $\btheta \in \Sph^{d-1}$,
  \item $Z(\btheta;\tau) \geq Z_{\min}(U) > 0$ uniformly in $(\btheta,\tau) \in \Sph^{d-1} \times U$.
\end{enumerate}
\end{lemma}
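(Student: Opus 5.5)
Part (i) follows from the chain rule applied term by term, and part (ii) from a compactness argument applied to the jointly continuous function $(\btheta,\tau)\mapsto Z(\btheta;\tau)$. Here $\psi$-covering of $\mathcal{D}(\tau)$ is understood as condition \eqref{eq:denominator} holding for the basis $\mathcal{D}(\tau)$, i.e.\ $Z(\btheta;\tau)>0$ for every $\btheta$.

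For (i), fix $\btheta\in\Sph^{d-1}$ and consider the scalar maps $g_j(\tau):=\btheta^{\!\top}\bd_j(\tau)$. Each $g_j$ is $\mathcal{C}^1$ on $U$ because $\bd_j(\tau)$ is, and $g_j(\tau)\in[-1,1]$ since both vectors are unit vectors. Since $k\ge 1$, $\psi$ is at least $\mathcal{C}^1$ on $[-1,1]$, so the chain rule gives that $\psi\circ g_j$ is $\mathcal{C}^1$ with derivative
\[
  \partial_\tau \psi\bigl(\btheta^{\!\top}\bd_j(\tau)\bigr) = \psi'\bigl(\btheta^{\!\top}\bd_j(\tau)\bigr)\,\btheta^{\!\top}\bd_j'(\tau).
\]
A finite sum of $\mathcal{C}^1$ functions is $\mathcal{C}^1$, which gives (i). I will also record that this derivative is jointly continuous in $(\btheta,\tau)$ and bounded by $\|\psi'\|_\infty\sum_j\|\bd_j'(\tau)\|$. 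This is useful later for the continuity of $w_i$ in $\tau$.

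For (ii), the same composition argument shows that $Z$ is jointly continuous on $\Sph^{d-1}\times U$. The map $(\btheta,\tau)\mapsto \btheta^{\!\top}\bd_j(\tau)$ is continuous, and $\psi$ is continuous. By the $\psi$-covering hypothesis, $Z>0$ pointwise on this set. The main obstacle is that a uniform positive lower bound needs compactness in $\tau$ as well, and an arbitrary open neighborhood $U$ does not supply it. Indeed, $Z$ could degenerate toward $\partial U$.

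I would handle this by shrinking $U$ to a compact neighborhood of $\tau_0$, for instance a closed ball $\overline{B}(\tau_0,r)\subset U$, and rename it $U$. This is the standard reading of the statement, since "a neighborhood" can always be taken compact. On the compact set $\Sph^{d-1}\times U$, the continuous positive function $Z$ attains its minimum. Setting $Z_{\min}(U):=\min_{\Sph^{d-1}\times U} Z>0$ yields the uniform bound, exactly as in the proof of Proposition~\ref{prop:pou}(iii) but with the extra parameter $\tau$.
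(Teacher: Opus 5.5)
Your proof is correct and follows the same route as the paper. For (i) you apply the chain rule term by term, and for (ii) you use that a continuous, pointwise positive function attains a positive minimum on a compact set.

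The one place you differ is how you get compactness in $\tau$, and your version is the sound one. The paper passes to $\Sph^{d-1}\times\overline{U}$ and asserts that $Z$ is strictly positive there ``by the $\psi$-covering assumption.'' But covering is only assumed on $U$, not on its closure, and $\overline{U}$ is compact only if $U$ is bounded.

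This is not a technicality. Take $d=2$, $\psi(t)=\max(t,0)^{k+1}$, and directions at angles $0$, $2\pi/3$, $\tau$, with $U=(\pi,5\pi/3)\ni\tau_0=4\pi/3$. The basis is $\psi$-covering for every $\tau\in U$, since all angular gaps are below $\pi$. Yet $\inf_{\Sph^{1}\times U}Z=0$, because a gap of width tending to $\pi$ opens as $\tau\to\pi$ or $\tau\to5\pi/3$.

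So (ii) genuinely requires shrinking to a compact neighborhood such as your $\overline{B}(\tau_0,r)\subset U$. This costs nothing downstream, since Proposition~\ref{prop:continuity_basis} only uses the bound as $\tau\to\tau_0$. For the softmax kernel the issue disappears entirely, since $Z\ge n e^{-1/\varepsilon}$ for every basis and every $\tau$.
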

 
\begin{proof}
(i) Each term $\psi(\btheta^{\!\top}\bd_j(\tau))$ is a $\mathcal{C}^1$ composition of $\psi \in \mathcal{C}^k$ and $\tau \mapsto \btheta^{\!\top}\bd_j(\tau) \in \mathcal{C}^1$; the finite sum inherits this regularity.

(ii) The function $(\btheta,\tau) \mapsto Z(\btheta;\tau)$ is continuous on the compact set $\Sph^{d-1} \times \overline{U}$ and strictly positive by the $\psi$-covering assumption; the infimum is therefore attained and positive.
\end{proof}
 
\begin{proposition}[Continuity of PoU weights w.r.t.\ the directional basis]
\label{prop:continuity_basis}
Under the hypotheses of lemma~\ref{lem:denom}, for every fixed $\btheta \in \Sph^{d-1}$ and every $i = 1, \ldots, n$, the map
\begin{equation}
  \tau \longmapsto w_i(\btheta;\tau)
  \label{eq:map_tau}
\end{equation}
is continuous on $U$.
Moreover, $w_i(\cdot;\tau)$ converges uniformly on $\Sph^{d-1}$ as $\tau \to \tau_0$:
\begin{equation}
  \lim_{\tau \to \tau_0}  \sup_{\btheta \in \Sph^{d-1}}
  \bigl|w_i(\btheta;\tau) - w_i(\btheta;\tau_0)\bigr|  = 0.
  \label{eq:uniform_continuity}
\end{equation}
\end{proposition}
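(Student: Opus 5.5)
Pointwise continuity follows directly from Lemma~\ref{lem:denom}. Write $\rho_i(\btheta;\tau) := \psi(\btheta^{\!\top}\bd_i(\tau))$, so that $w_i(\btheta;\tau) = \rho_i(\btheta;\tau)/Z(\btheta;\tau)$. For fixed $\btheta$, each $\rho_i(\btheta;\cdot)$ is continuous on $U$, being a composition of the continuous $\psi$ with the $\mathcal{C}^1$ map $\tau \mapsto \btheta^{\!\top}\bd_i(\tau)$. By Lemma~\ref{lem:denom}(i), $Z(\btheta;\cdot)$ is $\mathcal{C}^1$, hence continuous, and by part (ii) it is bounded below by $Z_{\min}(U)>0$. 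A quotient of continuous functions with a denominator bounded away from zero is continuous, which gives \eqref{eq:map_tau}.

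For the uniform statement \eqref{eq:uniform_continuity}, I will give an explicit estimate rather than appeal to compactness abstractly. Start from the decomposition
\[
w_i(\btheta;\tau)-w_i(\btheta;\tau_0)
= \frac{\rho_i(\btheta;\tau)-\rho_i(\btheta;\tau_0)}{Z(\btheta;\tau)}
+ \rho_i(\btheta;\tau_0)\,\frac{Z(\btheta;\tau_0)-Z(\btheta;\tau)}{Z(\btheta;\tau)Z(\btheta;\tau_0)} .
\]
Three bounds then close the argument.
\begin{enumerate}[label=\textup{(\alph*)}]
\item The denominators are at least $Z_{\min}(U)$, uniformly in $\btheta$, by Lemma~\ref{lem:denom}(ii).
\item We have $\rho_i \le \|\psi\|_\infty$ on $[-1,1]$.
\item Since $\psi\in\mathcal{C}^1([-1,1])$, it is Lipschitz with constant $L_\psi=\|\psi'\|_\infty$. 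Using $|\btheta|=1$ and Cauchy--Schwarz,
\[
|\rho_j(\btheta;\tau)-\rho_j(\btheta;\tau_0)| \le L_\psi\,\bigl|\btheta^{\!\top}(\bd_j(\tau)-\bd_j(\tau_0))\bigr| \le L_\psi\,\|\bd_j(\tau)-\bd_j(\tau_0)\|.
\]
\end{enumerate}
Combining (a)--(c) yields
\[
\sup_{\btheta}|w_i(\btheta;\tau)-w_i(\btheta;\tau_0)| \le \Bigl(\frac{L_\psi}{Z_{\min}} + \frac{\|\psi\|_\infty L_\psi n}{Z_{\min}^2}\Bigr)\max_j\|\bd_j(\tau)-\bd_j(\tau_0)\|.
\]
The right-hand side tends to $0$ as $\tau\to\tau_0$ because each $\bd_j$ is continuous in $\tau$.

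The only delicate point is the uniformity of $Z_{\min}(U)$ over $\btheta$, and this is exactly what Lemma~\ref{lem:denom}(ii) supplies. If one worries that $\overline{U}$ might not lie inside the covering region, it suffices to shrink $U$ to a compact neighborhood of $\tau_0$ on which $\psi$-covering still holds. Since the conclusion is a limit as $\tau\to\tau_0$, shrinking $U$ costs nothing.
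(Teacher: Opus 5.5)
Your proof is correct and follows essentially the paper's argument: the same add-and-subtract splitting of $\rho_i/Z$, Cauchy--Schwarz to make the estimate uniform in $\btheta$, and Lemma~\ref{lem:denom}(ii) for the denominators. Your remark about shrinking $U$ to a compact neighbourhood is a sensible safeguard, since that lemma's proof tacitly uses $\overline U$.

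The only differences are cosmetic. The paper works with the modulus of continuity $\omega_\psi$, so it needs only continuity of $\psi$; your Lipschitz rate is its Remark~\ref{rem:lipschitz}. It also regroups $\rho_i(\btheta;\tau_0)/Z(\btheta;\tau_0)=w_i(\btheta;\tau_0)\le 1$, which gives the cleaner constant $(n+1)/Z_{\min}$ in place of your $\|\psi\|_\infty n/Z_{\min}^2$ term.
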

 
\begin{proof}
Fix $i$ and let $\tau \to \tau_0$ in $U$.
Write $\rho_i(\btheta;\tau) := \psi(\btheta^{\!\top}\bd_i(\tau))$.
 
\medskip
\noindent\textbf{Step 1 — Pointwise continuity of $\rho_i$.}

Since $\bd_i(\tau) \to \bd_i(\tau_0)$ and the inner product is continuous, $\btheta^{\!\top}\bd_i(\tau) \to \btheta^{\!\top}\bd_i(\tau_0)$ for every $\btheta$.
By continuity of $\psi$, $\rho_i(\btheta;\tau) \to \rho_i(\btheta;\tau_0)$ pointwise.
 
\medskip
\noindent\textbf{Step 2 — Uniform continuity via compactness.}

The convergence in Step 1 is actually uniform in $\btheta$.
Indeed, for any $\btheta \in \Sph^{d-1}$,
\begin{equation}
  |\rho_i(\btheta;\tau) - \rho_i(\btheta;\tau_0)|  =
  \bigl|\psi\!\left(\btheta^{\!\top}\bd_i(\tau)\right) - \psi\!\left(\btheta^{\!\top}\bd_i(\tau_0)\right)\bigr|.
  \label{eq:rho_diff}
\end{equation}
Let $\omega_\psi$ denote the modulus of continuity of $\psi$ on $[-1,1]$ (which is uniform since $[-1,1]$ is compact).
By Cauchy–Schwarz inequality, $|\btheta^{\!\top}(\bd_i(\tau)-\bd_i(\tau_0))| \leq \|\bd_i(\tau)-\bd_i(\tau_0)\|$, so
\begin{equation}
  \sup_{\btheta \in \Sph^{d-1}}  |\rho_i(\btheta;\tau) - \rho_i(\btheta;\tau_0)|
  \leq  \omega_\psi\!\left(\|\bd_i(\tau)-\bd_i(\tau_0)\|\right)  \xrightarrow{\tau\to\tau_0} 0.
  \label{eq:rho_uniform}
\end{equation}
 
\medskip
\noindent\textbf{Step 3 — Continuity of $w_i = \rho_i / Z$.}
Write
\begin{equation}
  w_i(\btheta;\tau) - w_i(\btheta;\tau_0)  =   \frac{\rho_i(\btheta;\tau)Z(\btheta;\tau_0) - \rho_i(\btheta;\tau_0)Z(\btheta;\tau)}   {Z(\btheta;\tau)Z(\btheta;\tau_0)}.
  \label{eq:diff_quotient}
\end{equation}
Adding and subtracting $\rho_i(\btheta;\tau_0)Z(\btheta;\tau_0)$ in the numerator of \eqref{eq:diff_quotient} gives
\begin{align}
  \bigl|w_i(\btheta;\tau) - w_i(\btheta;\tau_0)\bigr|  &\leq   \frac{|\rho_i(\btheta;\tau)-\rho_i(\btheta;\tau_0)|}{Z(\btheta;\tau)}
  +  \underbrace{\frac{\rho_i(\btheta;\tau_0)}{Z(\btheta;\tau_0)}}_{\leq1}
  \cdot  \frac{|Z(\btheta;\tau)-Z(\btheta;\tau_0)|}{Z(\btheta;\tau)}.
  \label{eq:bound_split}
\end{align}
By lemma~\ref{lem:denom}(ii), $Z(\btheta;\tau) \geq Z_{\min} > 0$ uniformly.
Summing \eqref{eq:rho_uniform} over $j$ yields the uniform bound $\sup_{\btheta\in\Sph^{d-1}} |Z(\btheta;\tau) - Z(\btheta;\tau_0)| \leq n\omega_\psi(\|\bd_j(\tau)-\bd_j(\tau_0)\|_{\max})$.
Inserting into \eqref{eq:bound_split} and taking the supremum over $\btheta$,
\begin{equation}
  \sup_{\btheta \in \Sph^{d-1}}  \bigl|w_i(\btheta;\tau) - w_i(\btheta;\tau_0)\bigr|
  \leq  \frac{(n+1)}{Z_{\min}}  \omega_\psi\!\left(\max_j\|\bd_j(\tau)-\bd_j(\tau_0)\|\right)  \xrightarrow{\tau\to\tau_0} 0,
  \label{eq:final_bound}
\end{equation}
which establishes \eqref{eq:uniform_continuity}. Pointwise continuity \eqref{eq:map_tau} follows as a special case.
\end{proof}
 
\begin{remark}[Lipschitz case and softmax]
\label{rem:lipschitz}
If $\psi$ is Lipschitz with constant $L_\psi$ on $[-1,1]$, then $\omega_\psi(\delta) \leq L_\psi\delta$ and \eqref{eq:final_bound} gives the explicit uniform Lipschitz bound
\[
  \sup_{\btheta}\bigl|w_i(\btheta;\tau) - w_i(\btheta;\tau_0)\bigr|  \leq
  \frac{(n+1)L_\psi}{Z_{\min}}  \max_j\|\bd_j(\tau) - \bd_j(\tau_0)\|.
\]
For the softmax kernel $\psi_\varepsilon(t) = \exp(t/\varepsilon)$, we have $L_\psi = \varepsilon^{-1}\exp(1/\varepsilon)$ and $Z_{\min} \geq n\exp(-1/\varepsilon)$, yielding
\[
  \sup_{\btheta}\bigl|w_i(\btheta;\tau) - w_i(\btheta;\tau_0)\bigr|
  \leq  \frac{(n+1)e^{2/\varepsilon}}{n\varepsilon}
  \max_j\|\bd_j(\tau) - \bd_j(\tau_0)\|.
\]
This constant grows as $\varepsilon \to 0$, reflecting the fact that sharper weights (near the hard-assignment limit) are more sensitive to perturbations of the basis.
\end{remark}
 
\begin{remark}[Differentiability]
\label{rem:differentiability}
If $\psi \in \mathcal{C}^1$ and each $\tau \mapsto \bd_i(\tau)$ is $\mathcal{C}^1$, one can differentiate $w_i(\btheta;\tau)$ with respect
to $\tau$ by the quotient rule:
\[
  \frac{\partial w_i}{\partial \tau_\ell}  =
  \frac{1}{Z}  \left(\frac{\partial \rho_i}{\partial \tau_\ell} - w_i \sum_{j=1}^n \frac{\partial \rho_j}{\partial \tau_\ell}  \right),
\]
where
$\dfrac{\partial \rho_i}{\partial \tau_\ell}  = \psi'\!\left(\btheta^{\!\top}\bd_i\right)   \btheta^{\!\top}\dfrac{\partial \bd_i}{\partial \tau_\ell}$.
This makes gradient-based optimization of the directional basis (with respect to any differentiable objective) well-defined.
\end{remark}

\section{Properties of the Descriptor}
\label{sec:properties}

The properties of $\bF(\bp)$ naturally fall into three groups, presented in the order below.
\emph{Signal-space properties} (Group A) characterize what the descriptor captures about the environment signal $f$: energy preservation, asymptotic completeness of the functional family, and the Parseval-type Gram norm.
\emph{Stability properties} (Group B) quantify how much $\bF$ changes when the directional basis is perturbed.
\emph{Map-space properties} (Group C) describe how $\bF$ varies with the query point $\bp \in \M$.

\subsection*{A. Signal-Space Properties}
\label{sec:signal_space}

\subsection{Energy Preservation}
 
\begin{proposition}[Energy preservation]
\label{prop:energy}
For any $\bp \in \M$ and any $f \in L^1(\Sph^{d-1})$,
\begin{equation}
  \sum_{i=1}^{n} \phi_i(\bp)  =  \int_{\Sph^{d-1}} f(\btheta) \dd\sigma(\btheta).
  \label{eq:energy}
\end{equation}
\end{proposition}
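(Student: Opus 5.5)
The plan is to sum the defining integrals \eqref{eq:component} over $i$, exchange the finite sum with the integral, and then apply the partition-of-unity identity of Proposition~\ref{prop:pou}(ii). Concretely, I will write
\[
  \sum_{i=1}^{n}\phi_i(\bp)
  = \sum_{i=1}^{n}\int_{\Sph^{d-1}} f(\btheta)\,w_i(\btheta)\,\dd\sigma(\btheta)
  = \int_{\Sph^{d-1}} f(\btheta)\Bigl(\sum_{i=1}^{n} w_i(\btheta)\Bigr)\dd\sigma(\btheta),
\]
and then replace the bracket by $1$ pointwise on $\Sph^{d-1}$, which yields \eqref{eq:energy}.

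The only point needing care is that each $\phi_i(\bp)$ is a well-defined finite number when $f\in L^1(\Sph^{d-1})$. Justifying this is also what licenses splitting the integral of the sum into the sum of the integrals. By Proposition~\ref{prop:pou}(i)--(ii), we have $0\le w_i\le 1$ everywhere. Each $w_i$ is moreover continuous by Proposition~\ref{prop:pou}(iii), hence Borel measurable. It follows that $|f w_i|\le |f|$ pointwise, so every $f w_i$ lies in $L^1(\Sph^{d-1})$ with $|\phi_i(\bp)|\le \|f\|_{L^1}$.

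With integrability in hand, the exchange of the finite sum and the integral is simply linearity of the Lebesgue integral. No monotone or dominated convergence argument is required, because there are only $n$ terms.

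I expect no genuine obstacle here. The step worth stating explicitly is the integrability bound $|f w_i|\le|f|$, since it is what makes the identity valid for all of $L^1$ rather than only for bounded or continuous signals. It is also worth remarking that the argument uses nothing specific to the softmax kernel: any weights satisfying \eqref{eq:denominator} form a PoU by Proposition~\ref{prop:pou}. Consequently the identity is independent of $\varepsilon$, of $n$, and of the choice of directional basis $\mathcal{D}$.
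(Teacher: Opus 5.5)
Your proposal is correct and matches the paper's proof: sum the defining integrals, interchange the finite sum with the integral by linearity, and apply the partition-of-unity identity of Proposition~\ref{prop:pou}(ii). Your explicit check that $|f w_i| \le |f|$, which makes each $\phi_i(\bp)$ finite for $f \in L^1(\Sph^{d-1})$, is a worthwhile detail that the paper leaves implicit.
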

 
\begin{proof}
By linearity of the integral and proposition~\ref{prop:pou}(ii),
\[
  \sum_{i=1}^{n} \phi_i(\bp)  =  \int_{\Sph^{d-1}} f(\btheta)    \underbrace{\sum_{i=1}^{n} w_i(\btheta)}_{=1}
  \dd\sigma(\btheta)  =  \int_{\Sph^{d-1}} f(\btheta) \dd\sigma(\btheta). \qedhere
\]
\end{proof}
 
\begin{remark}
proposition~\ref{prop:energy} shows that the sum of all feature components equals the total ''energy'' of the signal $f$ regardless of the choice of basis $\mathcal{D}$ or temperature $\varepsilon$.
In particular, the PoU construction distributes — but never creates or destroys — signal energy across the $n$ directional channels.
\end{remark}

\subsection{Asymptotic Completeness and Discriminability}
Having established pointwise and stability properties of the descriptor, we now address a more fundamental question: does the family $\{\phi_i^{(n)}\}$ carry \emph{enough} information to distinguish any two points with distinct environment signals? For a fixed basis $\mathcal{D}^{(n)}$ of finite order $n$, the answer can be negative — two signals differing only within the orthogonal complement of $\mathrm{span}\{w_i^{(n)}\}$ in $L^2(\Sph^{d-1})$ produce identical descriptors.
However, as the basis becomes dense on $\Sph^{d-1}$ and the temperature $\varepsilon^{(n)} \to 0$, the weights $\{w_i^{(n)}\}$ approximate the indicators of arbitrarily fine Borel partitions of $\Sph^{d-1}$ (remark~\ref{rem:temperature}), so their linear span becomes dense in $L^2(\Sph^{d-1})$.
The following theorem makes this precise by showing that the associated family of linear functionals is \emph{total}: the only signal annihilated by all of them is zero.
\begin{theorem}[Asymptotic Completeness and Totality]
\label{theo:completeness}
Let $\{\mathcal{D}^{(n)}\}_{n \geq 1}$ be a sequence of directional bases with mesh norm
\[
  h_n := \max_{\btheta \in \Sph^{d-1}}\min_{i}  \angle(\btheta, \bd_i^{(n)}) \longrightarrow 0,
\]
and let the temperature satisfy $\varepsilon^{(n)} \to 0$ as $n \to \infty$.
Then the family of linear functionals
\[
  \left\{L_i^{(n)} : f \mapsto \phi_i^{(n)}  = \textstyle\int_{\Sph^{d-1}} fw_i^{(n)}\dd\sigma\right\}_{i,n}
\]
is \emph{total} in $L^2(\Sph^{d-1})$, that is if $\phi_i^{(n)} = 0$ for all $i = 1,\ldots,n$ and all $n \geq 1$, then $f = 0$ in $L^2(\Sph^{d-1})$.
\end{theorem}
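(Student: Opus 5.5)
The plan is to show that if $f\in L^2(\Sph^{d-1})$ satisfies $\int f w_i^{(n)}\dd\sigma=0$ for all $i,n$, then $\int f g\,\dd\sigma=0$ for every continuous $g$. Since $C(\Sph^{d-1})$ is dense in $L^2(\Sph^{d-1})$, taking $g$ to be continuous approximants of $f$ then gives $\|f\|_{L^2}^2=0$. Equivalently, the span of $\bigcup_n\{w_i^{(n)}\}$ is dense in $L^2$. I will not use the a.e.\ hard limit of Remark~\ref{rem:temperature} directly, because the dominance regions $A_i^{(n)}$ change with $n$. Instead I will use a PoU interpolation: for $g\in C(\Sph^{d-1})$ define
\[
  g_n(\btheta):=\sum_{i=1}^n g(\bd_i^{(n)})\,w_i^{(n)}(\btheta)\in\mathrm{span}\{w_i^{(n)}\}.
\]
By hypothesis $\int f g_n\,\dd\sigma=0$ for every $n$. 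So it suffices to show $g_n\to g$ in $L^2$, since then $\int fg=\lim\int fg_n=0$ by Cauchy--Schwarz.

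\textbf{Key estimate.} By Proposition~\ref{prop:pou}(i),(ii),
\[
  |g_n(\btheta)-g(\btheta)|\le\sum_i w_i^{(n)}(\btheta)\,|g(\bd_i^{(n)})-g(\btheta)|.
\]
Fix $\delta>0$ and split the indices into near ones, $\angle(\btheta,\bd_i^{(n)})\le\delta$, and far ones. The near indices contribute at most $\omega_g(\delta)\sum_i w_i^{(n)}\le\omega_g(\delta)$, where $\omega_g$ is the modulus of continuity of $g$ on the compact sphere. The far indices contribute at most $2\|g\|_\infty\sum_{\text{far}}w_i^{(n)}(\btheta)$.

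\textbf{Main obstacle.} The hard step is bounding the far mass $\sum_{\text{far}}w_i^{(n)}(\btheta)$ uniformly, and here the relative rates of $h_n$, $\varepsilon^{(n)}$ and $n$ matter. By the mesh condition there is some $j$ with $\angle(\btheta,\bd_j^{(n)})\le h_n$, so $Z\ge e^{\cos h_n/\varepsilon^{(n)}}$. Each far term is at most $e^{\cos\delta/\varepsilon^{(n)}}$. Hence
\[
  \sum_{\text{far}}w_i^{(n)}\le n\exp\!\bigl(-(\cos h_n-\cos\delta)/\varepsilon^{(n)}\bigr).
\]
Once $h_n<\delta/2$, the exponent is at most $-c_\delta/\varepsilon^{(n)}$ with $c_\delta>0$. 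The bound therefore tends to zero provided $\varepsilon^{(n)}\log n\to0$. This does not follow from $\varepsilon^{(n)}\to0$ alone, because directions can cluster and $n$ can grow arbitrarily fast.

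I would avoid this by using the freedom in the statement, which quantifies over all $n\ge1$. It suffices to find \emph{some} subsequence along which $g_{n}\to g$. One could try passing to a subsequence $n_k$ with $\varepsilon^{(n_k)}\log n_k\to0$, but that may fail too. The more robust route replaces the factor $n$: the count of basis directions in a far cap is bounded by the number of $h_n$-separated points only after thinning, which is not available either. So as a fallback I would state and use the mild extra hypothesis $\varepsilon^{(n)}\log n\to0$, which holds for quasi-uniform bases with $n\sim h_n^{-(d-1)}$ and $\varepsilon^{(n)}\to0$ polynomially. I expect the paper either assumes this implicitly or argues via the a.e.\ hard limit together with fineness of the partitions.

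With this bound, $\sup_{\btheta}|g_n-g|\le\omega_g(\delta)+o(1)$. Letting $n\to\infty$ and then $\delta\to0$ gives uniform, hence $L^2$, convergence, because $\sigma(\Sph^{d-1})<\infty$. The density argument then concludes totality.
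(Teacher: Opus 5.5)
You follow the paper's route: the PoU interpolant $g^{(n)}=\sum_i g(\bd_i^{(n)})w_i^{(n)}$, the near/far split, and density of $\mathcal{C}(\Sph^{d-1})$ in $L^2$. The step you flag as the main obstacle is exactly where the paper's proof is defective. In \eqref{eq:far_sum} the paper bounds the denominator below by $e^{1/\varepsilon^{(n)}}$, which fails in general; only $Z\ge e^{\cos h_n/\varepsilon^{(n)}}$ is guaranteed, as you wrote. It then asserts that $n\exp\bigl((\cos\delta-1)/\varepsilon^{(n)}\bigr)\to 0$ ``as $\varepsilon^{(n)}\to 0$'', silently dropping the growing factor $n$. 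So the paper's argument tacitly uses your extra condition $\varepsilon^{(n)}\log n\to 0$, or something equivalent. Your proposal is therefore a correct proof of the theorem with that hypothesis added. Like the paper's proof, however, it does not prove the theorem under $h_n\to 0$ and $\varepsilon^{(n)}\to 0$ alone.

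Your diagnosis is sharp: no better estimate can rescue the interpolant. Work in $d=2$ and take $m=\lfloor\log n\rfloor$ equally spaced directions, plus $n-m$ directions packed into an arc of radius tending to $0$ around some $\bd_*$, with $\varepsilon^{(n)}=1/\log\log n$. Then $h_n\to 0$ and $\varepsilon^{(n)}\to 0$. Yet at every $\btheta$ the cluster contributes at least $(n-m)e^{-1/\varepsilon^{(n)}}$ to $Z$, while the rest contributes at most $m e^{1/\varepsilon^{(n)}}$. Since $\frac{n-m}{m}e^{-2/\varepsilon^{(n)}}=\frac{n-m}{m(\log n)^2}\to\infty$, the cluster's total weight tends to $1$ uniformly, so $g^{(n)}\to g(\bd_*)$ uniformly rather than to $g$.

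Proving totality in that generality would need different approximants from $\mathrm{span}\{w_i^{(n)}\}$. Totality only asks for some element of the span close to $g$, with arbitrary coefficients, and neither you nor the paper supplies such a construction.

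If you prefer not to impose a rate on $\varepsilon^{(n)}$, a non-clustering hypothesis is the more natural repair. Let $N_n(\btheta)$ count the basis directions within angle $\delta/2$ of $\btheta$, and bound $Z$ below using only those directions. This gives far mass at most $\frac{n}{N_n(\btheta)}\exp\bigl(-(\cos(\delta/2)-\cos\delta)/\varepsilon^{(n)}\bigr)$. For quasi-uniform bases (bounded mesh ratio), $n/N_n(\btheta)$ is bounded by a constant depending only on $\delta$, $d$ and the mesh-ratio bound. This covers the equally spaced and Fibonacci bases of Definition~\ref{def:basis}. For such bases, $\varepsilon^{(n)}\to 0$ alone suffices.
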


\begin{proof}
Let $f \in L^2(\Sph^{d-1})$ satisfy $\int_{\Sph^{d-1}} fw_i^{(n)}\dd\sigma = 0$ for all $i$ and all $n$.
We will show that $\int_{\Sph^{d-1}} fg\dd\sigma = 0$ for every $g \in \mathcal{C}(\Sph^{d-1})$, which implies $f = 0$ in $L^2$ since $\mathcal{C}(\Sph^{d-1})$ is dense in $L^2(\Sph^{d-1})$.

\medskip
\noindent\textbf{Step 1 — PoU interpolant of $g$.}
Given $g \in \mathcal{C}(\Sph^{d-1})$, define its \emph{partition-of-unity interpolant} at the basis $\mathcal{D}^{(n)}$:
\begin{equation}
  g^{(n)}(\btheta)  :=  \sum_{i=1}^{n} g\!\left(\bd_i^{(n)}\right) w_i^{(n)}(\btheta),
  \qquad \btheta \in \Sph^{d-1}.
  \label{eq:pou_interp}
\end{equation}
By the partition-of-unity property $\sum_i w_i^{(n)} = 1$,
\begin{equation}
  \bigl|g^{(n)}(\btheta) - g(\btheta)\bigr|  =  \Bigl|\sum_{i=1}^n
\bigl(g(\bd_i^{(n)}) - g(\btheta)\bigr) w_i^{(n)}(\btheta)  \Bigr|
  \leq  \sum_{i=1}^n \bigl|g(\bd_i^{(n)}) - g(\btheta)\bigr| w_i^{(n)}(\btheta).
  \label{eq:interp_err}
\end{equation}

\medskip
\noindent\textbf{Step 2 — Uniform convergence $g^{(n)} \to g$.}
Fix $\eta > 0$.
Since $g$ is uniformly continuous on the compact set $\Sph^{d-1}$, there exists $\delta = \delta(\eta) > 0$ such that $|g(\btheta') - g(\btheta)| \leq \eta$ whenever $\angle(\btheta',\btheta) \leq \delta$.
Split the sum in \eqref{eq:interp_err} into near and far indices:
\begin{equation}
  \bigl|g^{(n)}(\btheta) - g(\btheta)\bigr|
  \leq  \eta \underbrace{\sum_{\angle(\bd_i^{(n)},\btheta)\leq\delta} w_i^{(n)}(\btheta)}_{\leq1}
  +  2\|g\|_\infty  \underbrace{\sum_{\angle(\bd_i^{(n)},\btheta)>\delta} w_i^{(n)}(\btheta)}_{=:R^{(n)}(\btheta,\delta)}.
  \label{eq:near_far}
\end{equation}
For the far sum, using $\btheta^{\!\top}\bd_i^{(n)} \leq \cos\delta < 1$ on the far indices and the softmax formula,
\begin{equation}
  R^{(n)}(\btheta,\delta)
  \leq  \frac{ne^{\cos\delta/\varepsilon^{(n)}}}{e^{1/\varepsilon^{(n)}}}
  =  n\exp\!\left(\frac{\cos\delta - 1}{\varepsilon^{(n)}}\right)
  \longrightarrow 0
  \label{eq:far_sum}
\end{equation}
as $\varepsilon^{(n)} \to 0$, since $\cos\delta - 1 < 0$ for $\delta > 0$.
Furthermore, once $h_n \leq \delta/2$ (which holds for all large $n$ since $h_n \to 0$), every $\btheta$ has at least one near index, so the near sum accounts for the dominant weight.
Inserting \eqref{eq:far_sum} into \eqref{eq:near_far} and taking $n$ large enough,
\[
  \sup_{\btheta \in \Sph^{d-1}}
  \bigl|g^{(n)}(\btheta) - g(\btheta)\bigr|
  \leq \eta + 2\|g\|_\infty \cdot n  \exp\!\left(\frac{\cos\delta-1}{\varepsilon^{(n)}}\right)  \longrightarrow \eta.
\]
Since $\eta > 0$ was arbitrary, $g^{(n)} \to g$ uniformly, hence in $L^2(\Sph^{d-1})$.

\medskip
\noindent\textbf{Step 3 — Conclusion.}
By the $L^2$ convergence $g^{(n)} \to g$ and $g \in L^2$,
\begin{align*}
  \int_{\Sph^{d-1}} f g \dd\sigma
  &=  \lim_{n \to \infty}  \int_{\Sph^{d-1}} f g^{(n)} \dd\sigma \\
  &=  \lim_{n \to \infty}  \sum_{i=1}^{n} g\!\left(\bd_i^{(n)}\right)
  \underbrace{\int_{\Sph^{d-1}} f w_i^{(n)}\dd\sigma}_{=0}
  = 0.
\end{align*}
Since this holds for every $g \in \mathcal{C}(\Sph^{d-1})$, and $\mathcal{C}(\Sph^{d-1})$ is dense in $L^2(\Sph^{d-1})$, we conclude $f = 0$ in $L^2(\Sph^{d-1})$.
\end{proof}

\begin{theorem}[Asymptotic Injectivity of the Descriptor Map]
\label{theo:injectivity}
Under the conditions of theorem~\ref{theo:completeness}, for any two points $\bp, \bq \in \M$ with $f_{\bp} \neq f_{\bq}$ in $L^2(\Sph^{d-1})$, there exist $n_0 \in \mathbb{N}$ and $i_0 \in \{1,\ldots,n_0\}$ such that
\[
  \phi_{i_0}^{(n_0)}(\bp) \neq \phi_{i_0}^{(n_0)}(\bq).
\]
In particular, $\bF^{(n_0)}(\bp) \neq \bF^{(n_0)}(\bq)$.
\end{theorem}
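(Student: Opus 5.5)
The plan is to argue by contraposition and linearity, reducing the statement directly to Theorem~\ref{theo:completeness}. Set $u := f_{\bp} - f_{\bq} \in L^2(\Sph^{d-1})$; by hypothesis $u \neq 0$ in $L^2$. For every $n$ and $i$, the functional $L_i^{(n)}$ is linear, and it is well defined on $L^2$ because $w_i^{(n)}$ is continuous on the compact sphere (Proposition~\ref{prop:pou}(iii)), hence bounded and square-integrable. Linearity then gives
\[
  \phi_i^{(n)}(\bp) - \phi_i^{(n)}(\bq) = \int_{\Sph^{d-1}} u\, w_i^{(n)} \,\dd\sigma = L_i^{(n)}(u).
\]

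Suppose, for contradiction, that $\phi_i^{(n)}(\bp) = \phi_i^{(n)}(\bq)$ for all $n \geq 1$ and all $i \in \{1,\ldots,n\}$. Then $L_i^{(n)}(u) = 0$ for every index pair $(i,n)$. The hypotheses of Theorem~\ref{theo:completeness} hold by assumption, namely $h_n \to 0$ and $\varepsilon^{(n)} \to 0$. The totality conclusion of that theorem therefore forces $u = 0$ in $L^2$, contradicting $f_{\bp} \neq f_{\bq}$. Hence some pair $(i_0,n_0)$ satisfies $\phi_{i_0}^{(n_0)}(\bp) \neq \phi_{i_0}^{(n_0)}(\bq)$. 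The ``in particular'' claim follows because $\phi_{i_0}^{(n_0)}$ is the $i_0$-th coordinate of $\bF^{(n_0)}$ (Definition~\ref{def:feature}), so the two vectors differ in that coordinate.

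There is no real obstacle here, since the argument is a direct corollary of Theorem~\ref{theo:completeness}. The only points needing care are the following.
\begin{itemize}
  \item The signals $f_{\bp}$ and $f_{\bq}$ must be taken in $L^2(\Sph^{d-1})$, which the statement implicitly assumes by comparing them in $L^2$.
  \item The same bases $\mathcal{D}^{(n)}$ and temperatures $\varepsilon^{(n)}$ must be used for both points, so that the difference of descriptors is exactly $L_i^{(n)}(u)$.
  \item Theorem~\ref{theo:completeness} must be applied to the single signal $u$ rather than to $f_{\bp}$ and $f_{\bq}$ separately.
\end{itemize}
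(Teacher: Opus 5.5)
Your proposal is correct and follows the paper's proof essentially verbatim. Both assume for contradiction that all components agree for every $i$ and $n$, use linearity to get $\int_{\Sph^{d-1}} (f_{\bp}-f_{\bq})\,w_i^{(n)}\,\dd\sigma = 0$ for all $i,n$, and apply the totality of Theorem~\ref{theo:completeness} to $f = f_{\bp}-f_{\bq}$; your extra remarks (well-definedness on $L^2$, same bases and temperatures for both points) are sound bookkeeping that the paper leaves implicit.
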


\begin{proof}
Suppose for contradiction that $\phi_i^{(n)}(\bp) = \phi_i^{(n)}(\bq)$ for all $i$ and all $n$.
Then $\int_{\Sph^{d-1}} (f_{\bp} - f_{\bq}) w_i^{(n)}\dd\sigma = 0$ for all $i, n$.
By theorem~\ref{theo:completeness} applied to $f = f_{\bp} - f_{\bq}$, this forces $f_{\bp} = f_{\bq}$ in $L^2$, contradicting the hypothesis.

\end{proof}

\begin{remark}
theorem~\ref{theo:completeness} implies that the proposed descriptor family is information-complete in the limit of dense directional sampling. Unlike conventional histogram descriptors, which generally discard information through finite binning, the PoU descriptor admits an asymptotically injective representation of the directional signal.
\end{remark}
 
 \subsection{Descriptor-Induced Norm on the Signal Space}
\label{sec:norming}

The descriptor map $f \mapsto \bF(f) = [\phi_1(f),\ldots,\phi_n(f)]^\top$ is a bounded linear operator from $L^2(\Sph^{d-1})$ into $\R^n$.
We show that this operator induces a natural seminorm on $L^2(\Sph^{d-1})$ via its Gram matrix \cite{CurtainZwart1995}, which becomes a genuine norm in the limit $n \to \infty$.

\begin{definition}[Gram matrix of the PoU weights]
\label{def:gram}
The \emph{Gram matrix} of the partition-of-unity weights
$\{w_i\}_{i=1}^n$ is $\mathbf{G}_n \in \R^{n \times n}$ with entries
\begin{equation}
  (\mathbf{G}_n)_{ij}
  :=  \langle w_i, w_j \rangle_{L^2(\Sph^{d-1})}
  =  \int_{\Sph^{d-1}} w_i(\btheta) w_j(\btheta) \dd\sigma(\btheta).
  \label{eq:gram}
\end{equation}
\end{definition}

\begin{proposition}[Descriptor-induced seminorm]
\label{prop:norming}
The matrix $\mathbf{G}_n$ is positive definite. Define the \emph{descriptor-induced seminorm} of $f \in L^2(\Sph^{d-1})$:
\begin{equation}
  |f|_{\mathcal{D},n}
  :=  \left( \bF(f)^{\!\top} \mathbf{G}_n^{-1} \bF(f)  \right)^{1/2}.
  \label{eq:desc_norm}
\end{equation}
Then:
\begin{enumerate}[label=\textup{(\roman*)}]
  \item\label{norm:projection}
$|f|_{\mathcal{D},n} = \|P_n f\|_{L^2(\Sph^{d-1})}$, where $P_n$ is the orthogonal projection onto $V_n := \mathrm{span}\{w_1,\ldots,w_n\} \subset L^2(\Sph^{d-1})$;
  \item\label{norm:seminorm}
$|\cdot|_{\mathcal{D},n}$ is a seminorm on $L^2(\Sph^{d-1})$ with $|f|_{\mathcal{D},n} \leq \|f\|_{L^2}$;
  \item\label{norm:limit}
$|f|_{\mathcal{D},n} \to \|f\|_{L^2}$ as $n \to \infty$, under the completeness conditions of theorem~\ref{theo:completeness}.
\end{enumerate}
\end{proposition}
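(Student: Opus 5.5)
The plan is to prove the three items in the order: positive definiteness, then (i), then (ii), then (iii). Throughout I assume, as is implicit in Definition~\ref{def:basis}, that the directions $\bd_1,\ldots,\bd_n$ are pairwise distinct.

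\textbf{Positive definiteness.} For $c\in\R^n$ we have $c^\top\mathbf{G}_nc=\|\sum_i c_iw_i\|_{L^2}^2\ge 0$. So it suffices to show that the $w_i$ are linearly independent in $L^2(\Sph^{d-1})$. By Proposition~\ref{prop:pou} the $w_i$ are continuous, so an $L^2$-null combination vanishes everywhere. Since $Z>0$, this means
\[
\Phi(\btheta):=\sum_i c_i\exp(\btheta^{\!\top}\bd_i/\varepsilon)=0 \quad\text{on all of } \Sph^{d-1}.
\]
Fix an index $k$ and a $2$-plane containing $\bd_k$, and parametrize the great circle in it by $\btheta(t)=\cos t\,\bd_k+\sin t\,\mathbf{e}$.

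\begin{itemize}
\item For every $j$ one can write $\btheta(t)^{\!\top}\bd_j=r_j\cos(t-\alpha_j)$, with $r_j\le 1$. One has $(r_k,\alpha_k)=(1,0)$, and for $j\ne k$ either $r_j<1$ or $\alpha_j\not\equiv 0$, because the $\bd_j$ are distinct unit vectors.
\item The function $t\mapsto\Phi(\btheta(t))$ extends to an entire function of $t\in\mathbb{C}$. It vanishes on $\R$, so it vanishes identically.
\item Take $t=iy$. Then $\mathrm{Re}\,\bigl(r_j\cos(iy-\alpha_j)\bigr)=r_j\cos\alpha_j\cosh y$. This equals $\cosh y$ for $j=k$ and is at most $\rho\cosh y$ with $\rho<1$ for $j\neq k$.
\item Multiply the identity $\Phi=0$ by $e^{-\cosh y/\varepsilon}$ and let $y\to\infty$. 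All terms with $j\ne k$ decay, which forces $c_k=0$.
\end{itemize}

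Since $k$ was arbitrary, $c=0$. This analytic-continuation step is the only non-routine part of the positivity claim.

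\textbf{Item (i).} Write $P_nf=\sum_j a_jw_j$. The normal equations $\langle f-P_nf,w_i\rangle=0$ read $\mathbf{G}_na=\bF(f)$, so $a=\mathbf{G}_n^{-1}\bF(f)$. Then
\[
\|P_nf\|^2=a^\top\mathbf{G}_na=\bF(f)^\top\mathbf{G}_n^{-1}\bF(f),
\]
which is (i).

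\textbf{Item (ii).} This is immediate from (i). A linear operator composed with a norm is a seminorm, and orthogonal projections have operator norm at most $1$.

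\textbf{Item (iii).} The spaces $V_n$ need not be nested, so I will not argue by monotonicity. Instead I use the Pythagorean identity
\[
\|f\|^2=\|P_nf\|^2+\operatorname{dist}(f,V_n)^2,
\]
so it suffices to show $\operatorname{dist}(f,V_n)\to 0$. Given $\eta>0$:
\begin{itemize}
\item choose $g\in\mathcal{C}(\Sph^{d-1})$ with $\|f-g\|_{L^2}<\eta$;
\item take the PoU interpolant $g^{(n)}\in V_n$ from \eqref{eq:pou_interp};
\item by Step~2 of the proof of Theorem~\ref{theo:completeness}, $g^{(n)}\to g$ uniformly, hence $\|g-g^{(n)}\|_{L^2}\le \sigma(\Sph^{d-1})^{1/2}\|g-g^{(n)}\|_\infty\to0$.
\end{itemize}
Hence $\limsup_n\operatorname{dist}(f,V_n)\le\eta$ for every $\eta>0$, and (iii) follows.

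\textbf{Main obstacle.} The delicate point is the reliance on Step~2 of Theorem~\ref{theo:completeness}. That step needs the far-sum bound $n\exp\bigl((\cos\delta-1)/\varepsilon^{(n)}\bigr)\to0$, which holds only if $\varepsilon^{(n)}\log n\to 0$. I would state this rate explicitly as part of ``the completeness conditions'' under which (iii) is asserted.
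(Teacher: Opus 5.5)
Your proof is correct. For items (i) and (ii) it is the paper's argument: the normal equations $\mathbf{G}_n\boldsymbol\alpha=\bF(f)$, followed by contraction of $P_n$. You differ genuinely in two places, and in both your version is the sound one.

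\textbf{Positive definiteness.} The paper only asserts that the softmax weights are linearly independent because they ``have distinct peaks.'' Your continuation of $t\mapsto\sum_j c_j\exp(\btheta(t)^{\!\top}\bd_j/\varepsilon)$ to $t=iy$ actually proves it. There the $j$-th term has modulus $|c_j|e^{\bd_k^{\!\top}\bd_j\cosh y/\varepsilon}$, so it is dominated by the $k$-th term. Your case split on $r_j,\alpha_j$ can be replaced by the single observation $r_j\cos\alpha_j=\bd_k^{\!\top}\bd_j<1$.

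\textbf{Item (iii).} The paper argues that totality of $\{w_i^{(n)}\}_{i,n}$ makes the $V_n$ ``dense,'' hence $P_nf\to f$. But totality of the whole family only says that the closed span of $\bigcup_nV_n$ is $L^2(\Sph^{d-1})$. Since the $V_n$ are not nested, this does not give $\operatorname{dist}(f,V_n)\to0$. Your route uses the Pythagorean identity and the PoU interpolant $g^{(n)}\in V_n$. It relies on the uniform approximation from Step~2 rather than on the conclusion of Theorem~\ref{theo:completeness}, which is exactly what is needed. The paper's route is shorter only because it skips this point.

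\textbf{Rate condition.} Your caveat that $\varepsilon^{(n)}\log n\to0$ is required is right, and the paper's Step~2 needs it just as much. When you cite that step, make one further correction. The softmax denominator is not bounded below by $e^{1/\varepsilon^{(n)}}$ in general, only by $e^{\cos h_n/\varepsilon^{(n)}}$. So for $h_n\le\delta/2$ the far sum should be bounded by $n\exp\bigl((\cos\delta-\cos(\delta/2))/\varepsilon^{(n)}\bigr)$. Your rate condition still sends this to zero.
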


\begin{proof}
\textbf{Positive definiteness of $\mathbf{G}_n$.}
For any $\boldsymbol\alpha \in \R^n \setminus \{\mathbf{0}\}$,
\[
  \boldsymbol\alpha^\top \mathbf{G}_n \boldsymbol\alpha
  =  \int_{\Sph^{d-1}}
  \Bigl(\sum_{i=1}^n \alpha_i w_i(\btheta)\Bigr)^2  \dd\sigma(\btheta)
  =  \Bigl\|\sum_{i=1}^n \alpha_i w_i\Bigr\|_{L^2}^2.
\]
This is zero only if $\sum_i \alpha_i w_i = 0$ in $L^2$, i.e.\ if $\{w_i\}$ are linearly dependent in $L^2(\Sph^{d-1})$.
For the softmax weights with distinct basis directions $\{\bd_i\}$, the functions $\{w_i\}$ are linearly independent (they have distinct peaks on $\Sph^{d-1}$), so $\mathbf{G}_n \succ 0$.

\medskip
\noindent\textbf{Proof of \ref{norm:projection}.}
The orthogonal projection $P_n f \in V_n$ satisfies the normal equations: for each $j = 1,\ldots,n$,
\[
  \langle P_n f, w_j \rangle_{L^2}
  = \langle f, w_j \rangle_{L^2}
  = \phi_j(f).
\]
Writing $P_n f = \sum_i \alpha_i w_i$, the normal equations become $\mathbf{G}_n \boldsymbol\alpha = \bF(f)$, so $\boldsymbol\alpha = \mathbf{G}_n^{-1} \bF(f)$. Therefore,
\[
  \|P_n f\|_{L^2}^2
  =  \boldsymbol\alpha^\top \mathbf{G}_n \boldsymbol\alpha
  =  \bF(f)^\top \mathbf{G}_n^{-1} \mathbf{G}_n \mathbf{G}_n^{-1} \bF(f)
  =  \bF(f)^\top \mathbf{G}_n^{-1} \bF(f)
  =  |f|_{\mathcal{D},n}^2.
\]

\medskip
\noindent\textbf{Proof of \ref{norm:seminorm}.}
Linearity of $f \mapsto \bF(f)$ and bilinearity of $\mathbf{G}_n^{-1}$ give homogeneity and the triangle inequality for $|\cdot|_{\mathcal{D},n}$.
The upper bound $|f|_{\mathcal{D},n} \leq \|f\|_{L^2}$ follows from \ref{norm:projection}: the orthogonal projection contracts, $\|P_n f\|_{L^2} \leq \|f\|_{L^2}$. 
The kernel $\ker|\cdot|_{\mathcal{D},n} = V_n^\perp$ is non-trivial for finite $n$, so $|\cdot|_{\mathcal{D},n}$ is a seminorm, not a norm.

\medskip
\noindent\textbf{Proof of \ref{norm:limit}.}
Under the hypotheses of theorem~\ref{theo:completeness}, the family $\{w_i^{(n)}\}_{i,n}$ is total in $L^2(\Sph^{d-1})$, so $V_n^{(n)} = \mathrm{span}\{w_i^{(n)}\}$ is dense:
$P_n^{(n)} f \to f$ in $L^2$ for every $f$. 
Hence $|f|_{\mathcal{D},n} = \|P_n^{(n)} f\|_{L^2} \to \|f\|_{L^2}$.

\end{proof}

\begin{corollary}[Parseval interpretation]
\label{rem:parseval}
Property \ref{norm:projection} identifies $|f|_{\mathcal{D},n}$ as the $L^2$-norm of the component of $f$ captured by the subspace $V_n$.
As $n \to \infty$, property \ref{norm:limit} asserts that the descriptor recovers the full $L^2$-norm of $f$ — a Parseval-type identity:
\[
  \lim_{n \to \infty}  \bF^{(n)}(f)^{\!\top} \bigl(\mathbf{G}_n^{(n)}\bigr)^{-1} \bF^{(n)}(f)
  = \|f\|_{L^2(\Sph^{d-1})}^2.
\]
For finite $n$, the gap $\|f\|_{L^2}^2 - |f|_{\mathcal{D},n}^2 = \|f - P_n f\|_{L^2}^2$ measures the signal energy \emph{not captured} by the descriptor — the approximation error of $V_n$ for the function $f$.
\end{corollary}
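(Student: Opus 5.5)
The plan is to read both assertions off Proposition~\ref{prop:norming}, using the Pythagorean identity for the orthogonal projection $P_n$. I will first prove the finite-$n$ gap formula and then pass to the limit. For the limit I will not rely on the ``totality $\Rightarrow$ density'' step as stated in part~\ref{norm:limit}, because it is delicate when the subspaces $V_n$ are not nested. Instead I will re-derive $\|f-P_nf\|_{L^2}\to 0$ directly from the interpolation estimate in the proof of Theorem~\ref{theo:completeness}.

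\textbf{Step 1 (gap identity).} Since $\mathbf{G}_n\succ 0$ by Proposition~\ref{prop:norming}, the quadratic form $\bF(f)^{\!\top}\mathbf{G}_n^{-1}\bF(f)$ is well defined. By part~\ref{norm:projection} it equals $\|P_nf\|_{L^2}^2$. Because $P_n$ is the orthogonal projection onto $V_n$, the residual $f-P_nf$ is orthogonal to $P_nf$. Pythagoras then gives
\[
\|f\|_{L^2}^2=\|P_nf\|_{L^2}^2+\|f-P_nf\|_{L^2}^2 ,
\]
which is exactly $\|f\|_{L^2}^2-|f|_{\mathcal{D},n}^2=\|f-P_nf\|_{L^2}^2$. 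By the best-approximation property, this residual also equals $\mathrm{dist}(f,V_n)^2$.

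\textbf{Step 2 (the gap tends to zero).} Fix $f\in L^2(\Sph^{d-1})$ and $\eta>0$.
\begin{itemize}
  \item By density of $\mathcal{C}(\Sph^{d-1})$ in $L^2(\Sph^{d-1})$, choose $g$ continuous with $\|f-g\|_{L^2}\le\eta$.
  \item The PoU interpolant $g^{(n)}=\sum_i g(\bd_i^{(n)})\,w_i^{(n)}$ from \eqref{eq:pou_interp} lies in $V_n$.
  \item Step~2 of the proof of Theorem~\ref{theo:completeness} shows $g^{(n)}\to g$ uniformly, hence in $L^2$ because $\sigma(\Sph^{d-1})<\infty$.
\end{itemize}
Combining these,
\[
\mathrm{dist}(f,V_n)\le \|f-g\|_{L^2}+\|g-g^{(n)}\|_{L^2}\le 2\eta
\]
for all large $n$. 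Since $\eta$ is arbitrary, $\|f-P_nf\|_{L^2}\to 0$. Step~1 then yields $|f|_{\mathcal{D},n}^2\to\|f\|_{L^2}^2$, which is the displayed Parseval-type limit.

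\textbf{Main obstacle.} The one delicate point is the uniform convergence $g^{(n)}\to g$. The far-sum bound \eqref{eq:far_sum} contains a factor $n$, so $n\exp\!\big((\cos\delta-1)/\varepsilon^{(n)}\big)\to 0$ requires $\varepsilon^{(n)}$ to decay fast enough relative to $\log n$, for instance $\varepsilon^{(n)}\log n\to0$. I would make this rate assumption explicit as part of ``the completeness conditions.''

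If that assumption is to be avoided, I would try to sharpen the bound using only the mesh property. The denominator satisfies $Z\ge e^{\cos h_n/\varepsilon^{(n)}}$, since some basis direction lies within angle $h_n$ of $\btheta$. This improves the far sum to $n\exp\!\big((\cos\delta-\cos h_n)/\varepsilon^{(n)}\big)$, though a rate condition linking $n$ and $\varepsilon^{(n)}$ still appears to be needed. Everything else in the argument is routine Hilbert-space bookkeeping.
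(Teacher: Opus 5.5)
Your proof is correct under the rate assumption you make explicit. For the limit it takes a different and sounder route than the paper. The paper treats the corollary as an immediate restatement of Proposition~\ref{prop:norming}. The gap formula is the Pythagorean identity of your Step~1. The limit is read off from part~\ref{norm:limit}, whose proof passes from totality of the whole family $\{w_i^{(n)}\}_{i,n}$ to $P_nf\to f$. That inference fails for non-nested subspaces: in $\ell^2$, with $V_n=\mathrm{span}\{e_n\}$, the family $\{e_n\}$ is total, yet $P_nf=f_ne_n\to 0$. What is actually needed is $\mathrm{dist}(f,V_n)\to 0$. Your estimate $\mathrm{dist}(f,V_n)\le\|f-g\|_{L^2}+\sigma(\Sph^{d-1})^{1/2}\|g-g^{(n)}\|_\infty$, with $g^{(n)}\in V_n$, supplies this directly. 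In effect you use the quantitative content of Step~2 of Theorem~\ref{theo:completeness} rather than its qualitative conclusion. The price is that you inherit that step, and your diagnosis of it is right.

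Two refinements are needed.

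First, the bound $Z\ge e^{\cos h_n/\varepsilon^{(n)}}$ is not an optional sharpening but a necessary correction. The bound $Z\ge e^{1/\varepsilon^{(n)}}$ behind \eqref{eq:far_sum} holds only when $\btheta$ is itself a basis direction. With the corrected bound, $h_n\le\delta/2$ gives $R^{(n)}\le n\exp\bigl(-(\cos(\delta/2)-\cos\delta)/\varepsilon^{(n)}\bigr)$. This tends to $0$ when $\varepsilon^{(n)}\log n\to 0$, and your argument then closes.

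Second, the rate condition can be traded for quasi-uniformity $n\le C h_n^{-(d-1)}$, which the equispaced and Fibonacci bases of the paper satisfy. Bound $Z$ below by the nearest term when $h_n\gtrsim\sqrt{\varepsilon^{(n)}}$. Otherwise, bound it by the $\gtrsim(\sqrt{\varepsilon^{(n)}}/h_n)^{d-1}$ terms within angle $\sqrt{\varepsilon^{(n)}}$ of $\btheta$. Either way this yields $\sup_{\btheta}R^{(n)}\lesssim(\varepsilon^{(n)})^{-(d-1)/2}e^{-c_\delta/\varepsilon^{(n)}}\to 0$ for some $c_\delta>0$.

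Some condition of this kind is unavoidable on the interpolant route. Take $m$ equispaced directions and add a cluster of $N\ge m\,e^{2/\varepsilon^{(n)}}$ directions within angle $\varepsilon^{(n)}$ of a fixed $\bd^*$. The cluster then carries almost all softmax weight at every $\btheta\perp\bd^*$. So $g^{(n)}(\btheta)\to g(\bd^*)$ there, and uniform convergence fails for general $g$, even though $h_n\to0$ and $\varepsilon^{(n)}\to0$. Hence neither your argument nor the paper's establishes the limit under the bare hypotheses of Theorem~\ref{theo:completeness}; yours at least states the assumption it relies on.
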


\begin{remark}[Relation to the raw descriptor norm]
\label{rem:gram_vs_euclidean}
The raw Euclidean norm $\|\bF(f)\|_2$ and the descriptor-induced norm
$|f|_{\mathcal{D},n}$ are related by
\[
  \lambda_{\min}(\mathbf{G}_n)^{1/2}|f|_{\mathcal{D},n}
  \leq   \|\bF(f)\|_2
  \leq   \lambda_{\max}(\mathbf{G}_n)^{1/2}|f|_{\mathcal{D},n}.
\]
When the weights $\{w_i\}$ are nearly orthogonal in $L^2$ (e.g.\ for small $\varepsilon$ and well-separated basis directions), $\mathbf{G}_n \approx \|w_1\|_{L^2}^2 I_n$ and the two norms are proportional, so $\|\bF(f)\|_2$ itself approximates the seminorm up to the scalar factor $\|w_1\|_{L^2}$.
\end{remark}

\begin{remark}[Induced pseudo-metric on the map]
\label{rem:pseudometric}
The descriptor-induced seminorm induces a \emph{pseudo-metric} on $\M$:
\begin{equation}
  d_n(\bp,\bq)
  :=  |f_{\bp} - f_{\bq}|_{\mathcal{D},n}
  =  \left( (\bF(\bp) - \bF(\bq))^\top  \mathbf{G}_n^{-1} (\bF(\bp) - \bF(\bq))  \right)^{1/2}.
  \label{eq:pseudo_metric}
\end{equation}
By proposition~\ref{prop:norming}\ref{norm:limit}, $d_n(\bp,\bq) \to \|f_{\bp} - f_{\bq}\|_{L^2}$ as $n \to \infty$.
Combined with the local Fréchet differentiability of proposition~\ref{prop:diff_point}, the pseudo-metric satisfies the local bi-Lipschitz bound
\[
  \sigma_{\min}(J_{\bF}(\bp))\|\bp - \bq\|_2
  \lesssim   d_n(\bp,\bq)
  \lesssim   \|J_{\bF}(\bp)\|_{\mathrm{op}}\|\bp - \bq\|_2,
\]
showing that $d_n$ metrizes the topology of $\M$ locally whenever $\sigma_{\min}(J_{\bF}(\bp)) > 0$.
\end{remark}

\subsection*{B. Stability Under Basis Perturbation}
\label{sec:stability}

\subsection{Continuity with Respect to the Directional Basis}
 
\begin{proposition}[Continuity of feature components]
\label{prop:continuity}
Let $\mathcal{D}(\tau) = \{\bd_i(\tau)\}_{i=1}^n$ depend continuously on $\tau \in \R^m$, and let $w_i(\cdot;\tau)$ be the corresponding softmax weights \eqref{eq:softmax_weight}. If $f \in L^1(\Sph^{d-1})$, then for every $i = 1, \ldots, n$ the map
\[
  \tau \longmapsto \phi_i(\bp;\tau)
  =  \int_{\Sph^{d-1}} f(\btheta) w_i(\btheta;\tau)\dd\sigma(\btheta)
\]
is continuous on $\R^m$.
\end{proposition}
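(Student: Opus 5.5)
The plan is to reduce the statement to the uniform convergence of the weights established in proposition~\ref{prop:continuity_basis}, and then pass the limit through the integral using the integrability of $f$.

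Fix $\tau_0\in\R^m$ and a sequence $\tau\to\tau_0$. The softmax kernel $\psi_\varepsilon(t)=e^{t/\varepsilon}$ is strictly positive, so
\[
Z(\btheta;\tau)\geq n e^{-1/\varepsilon}>0
\]
for every $\btheta$ and every basis. This gives the $\psi$-covering hypothesis and the uniform lower bound of lemma~\ref{lem:denom}(ii) directly, with $Z_{\min}=ne^{-1/\varepsilon}$. No compactness of $\overline U$ and no differentiability of $\tau\mapsto\bd_i(\tau)$ is needed.

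Steps 1--3 of the proof of proposition~\ref{prop:continuity_basis} used only the continuity of $\bd_j(\cdot)$. Hence \eqref{eq:final_bound} holds verbatim:
\[
\sup_{\btheta}|w_i(\btheta;\tau)-w_i(\btheta;\tau_0)|\leq \frac{n+1}{Z_{\min}}\,\omega_{\psi_\varepsilon}\Bigl(\max_j\|\bd_j(\tau)-\bd_j(\tau_0)\|\Bigr).
\]
Because $\psi_\varepsilon$ is Lipschitz on $[-1,1]$, remark~\ref{rem:lipschitz} even gives the explicit constant $(n+1)e^{2/\varepsilon}/(n\varepsilon)$. The right-hand side tends to $0$ as $\tau\to\tau_0$ by continuity of each $\bd_j$.

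It then remains to estimate
\[
|\phi_i(\bp;\tau)-\phi_i(\bp;\tau_0)|\leq \int_{\Sph^{d-1}}|f|\,|w_i(\cdot;\tau)-w_i(\cdot;\tau_0)|\,\dd\sigma\leq \|f\|_{L^1}\,\sup_{\btheta}|w_i(\btheta;\tau)-w_i(\btheta;\tau_0)|,
\]
which tends to $0$. Alternatively, dominated convergence applies with dominating function $|f|$, since $0\leq w_i\leq 1$ by proposition~\ref{prop:pou} and the weights converge pointwise. Either route gives continuity at $\tau_0$, and $\tau_0$ was arbitrary.

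The main (mild) obstacle is checking that the hypotheses of lemma~\ref{lem:denom} and proposition~\ref{prop:continuity_basis} are actually met when $\bd_i(\tau)$ is merely continuous rather than $\mathcal C^1$. I would handle this by observing, as above, that the softmax denominator admits the explicit $\tau$-independent bound $Z\ge ne^{-1/\varepsilon}$. This makes the uniform weight estimate self-contained, and the rest is routine.
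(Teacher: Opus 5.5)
Your proof is correct, but your main route differs from the paper's. The paper argues exactly as in your closing ``alternatively'' sentence. Continuity of $\tau\mapsto\btheta^{\!\top}\bd_i(\tau)$ gives pointwise convergence $w_i(\btheta;\tau_\nu)\to w_i(\btheta;\tau_0)$, and dominated convergence with dominating function $|f|$ (from $0\le w_i\le 1$) finishes the argument.

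Your primary argument instead upgrades to uniform convergence of the weights by rerunning Proposition~\ref{prop:continuity_basis}. You correctly notice that its only nontrivial input, the uniform bound $Z\ge Z_{\min}>0$ from Lemma~\ref{lem:denom}, is available for the softmax kernel via $Z\ge ne^{-1/\varepsilon}$, with no $\mathcal{C}^1$ or compactness hypothesis.

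What this buys is quantitative. You obtain $|\phi_i(\bp;\tau)-\phi_i(\bp;\tau_0)|\le \frac{(n+1)e^{2/\varepsilon}}{n\varepsilon}\,\|f\|_{L^1}\max_j\|\bd_j(\tau)-\bd_j(\tau_0)\|_2$. This modulus of continuity is uniform over $L^1$-bounded sets of signals. It is precisely the estimate of Proposition~\ref{prop:lipschitz}, so in effect you prove the later Lipschitz stability result and read off continuity as a corollary.

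The paper's dominated-convergence route is softer and gives no rate, but it uses less. It needs only pointwise positivity of the denominator and $0\le w_i\le 1$. It therefore transfers verbatim to any continuous nonnegative kernel $\psi$ with $Z>0$, without requiring a uniform lower bound on $Z$.
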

 
\begin{proof}
Let $\tau_\nu \to \tau_0$.
Since $\exp(\cdot/\varepsilon)$ is continuous and $\tau \mapsto \btheta^{\!\top}\bd_i(\tau)$ is continuous for every fixed $\btheta$, we have $w_i(\btheta;\tau_\nu) \to w_i(\btheta;\tau_0)$ pointwise on $\Sph^{d-1}$.
Because $0 \leq w_i \leq 1$, the product $f(\btheta)w_i(\btheta;\tau_\nu)$ is dominated by $|f(\btheta)| \in L^1$.
The Lebesgue Dominated Convergence Theorem then gives
\[
  \lim_{\nu\to\infty}\phi_i(\bp;\tau_\nu)
  = \int_{\Sph^{d-1}} f(\btheta) \lim_{\nu\to\infty} w_i(\btheta;\tau_\nu)\dd\sigma(\btheta)
  = \phi_i(\bp;\tau_0). \qedhere
\]
\end{proof}
 
\subsection{Lipschitz Stability}

The following result quantifies how much the feature components change when the directional basis is perturbed, for a general admissible kernel $\psi$.

\begin{proposition}[Lipschitz stability of feature components]
\label{prop:lipschitz}
Let $\psi : [-1,1] \to \R_{\geq 0}$ be Lipschitz with constant $L_\psi$, and let $\mathcal{D} = \{\bd_i\}_{i=1}^n$, $\mathcal{D}' = \{\bd_i'\}_{i=1}^n$ be two directional bases.
Set
\[
  \delta := \max_{i}\|\bd_i - \bd_i'\|_2, \qquad
  Z_{\min} := \inf_{\btheta}\min\Bigl(
\textstyle\sum_j\psi(\btheta^{\!\top}\bd_j),
\textstyle\sum_j\psi(\btheta^{\!\top}\bd_j')\Bigr) > 0.
\]
Then, for every $i = 1,\ldots,n$ and every $f \in L^1(\Sph^{d-1})$,
\begin{equation}
  \bigl|\phi_i(\bp) - \phi_i'(\bp)\bigr|
  \leq  \frac{(n+1)L_\psi}{Z_{\min}}  \|f\|_{L^1}\delta,
  \label{eq:lipschitz_component}
\end{equation}
and consequently
$\|\bF(\bp) - \bF'(\bp)\|_2
 \leq \frac{(n+1)\sqrt{n}L_\psi}{Z_{\min}}\|f\|_{L^1}\delta$.
\end{proposition}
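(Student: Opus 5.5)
The plan is to reduce the claim to a uniform (in $\btheta$) bound on the weight difference $|w_i(\btheta) - w_i'(\btheta)|$, and then integrate against $|f|$. Since $\phi_i(\bp) - \phi_i'(\bp) = \int_{\Sph^{d-1}} f(\btheta)\bigl(w_i(\btheta) - w_i'(\btheta)\bigr)\dd\sigma(\btheta)$, Hölder's inequality with exponents $(1,\infty)$ gives
\[
  \bigl|\phi_i(\bp) - \phi_i'(\bp)\bigr| \leq \|f\|_{L^1}\sup_{\btheta\in\Sph^{d-1}}\bigl|w_i(\btheta) - w_i'(\btheta)\bigr|.
\]
So everything hinges on showing $\sup_{\btheta}|w_i - w_i'| \leq (n+1)L_\psi\delta/Z_{\min}$. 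This is exactly the estimate of Remark~\ref{rem:lipschitz}, obtained from the proof of Proposition~\ref{prop:continuity_basis}. However, that proof was phrased for a continuous family $\mathcal{D}(\tau)$. I would therefore redo it directly for the two fixed bases $\mathcal{D},\mathcal{D}'$ rather than invoke it, to avoid the parametrization hypothesis.

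Concretely, I write $\rho_i = \psi(\btheta^{\!\top}\bd_i)$ and $\rho_i' = \psi(\btheta^{\!\top}\bd_i')$, with $Z, Z'$ the corresponding denominators. By Cauchy--Schwarz, $|\btheta^{\!\top}(\bd_i - \bd_i')| \leq \|\bd_i-\bd_i'\|_2 \leq \delta$. The Lipschitz property of $\psi$ then gives $|\rho_i - \rho_i'| \leq L_\psi\delta$ for every $\btheta$, and summing over $j$ gives $|Z - Z'| \leq nL_\psi\delta$. Next I use the splitting \eqref{eq:bound_split}:
\[
  |w_i - w_i'| \leq \frac{|\rho_i - \rho_i'|}{Z'} + \frac{\rho_i}{Z}\cdot\frac{|Z - Z'|}{Z'}.
\]
Here $\rho_i/Z = w_i \leq 1$, and $Z' \geq Z_{\min}$ by the definition of $Z_{\min}$, which takes the infimum over both denominators. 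This yields $|w_i - w_i'| \leq (1+n)L_\psi\delta/Z_{\min}$ uniformly in $\btheta$, and the componentwise bound \eqref{eq:lipschitz_component} follows.

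For the vector bound, each of the $n$ components of $\bF(\bp) - \bF'(\bp)$ is bounded by the same constant $C = (n+1)L_\psi\|f\|_{L^1}\delta/Z_{\min}$. Hence $\|\bF(\bp) - \bF'(\bp)\|_2 \leq \sqrt{n}\,C$, via $\|\mathbf{x}\|_2 \leq \sqrt{n}\|\mathbf{x}\|_\infty$.

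The only delicate point is the bookkeeping in the quotient splitting: we must check that the add-and-subtract step produces exactly the factor $\rho_i/Z$ (bounded by $1$) together with a single factor $1/Z_{\min}$, rather than $1/Z_{\min}^2$. Writing the numerator as $\rho_i'Z - \rho_iZ' = (\rho_i' - \rho_i)Z + \rho_i(Z - Z')$ over $ZZ'$ handles this cleanly. Measurability and integrability are immediate, since $w_i, w_i'$ are continuous and bounded by $1$ (Proposition~\ref{prop:pou}).
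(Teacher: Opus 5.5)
Your proposal is correct and matches the paper's proof step for step. You use the same Cauchy--Schwarz/Lipschitz estimates $|\rho_i-\rho_i'|\le L_\psi\delta$ and $|Z-Z'|\le nL_\psi\delta$, and the same splitting $|w_i-w_i'|\le |\rho_i-\rho_i'|/Z' + w_i|Z-Z'|/Z'$ with $w_i\le 1$ and $Z'\ge Z_{\min}$ (the paper also argues directly for the two fixed bases). You then integrate against $|f|$ and use $\|\cdot\|_2\le\sqrt{n}\|\cdot\|_\infty$, exactly as the paper does. One cosmetic point: continuity of $w_i,w_i'$ follows directly from continuity of $\psi$ and $Z,Z'\ge Z_{\min}>0$, so you need not cite Proposition~\ref{prop:pou}, which assumes $\psi\in\mathcal{C}^k$.
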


\begin{proof}
Write $\rho_i = \psi(\btheta^{\!\top}\bd_i)$, $\rho_i' = \psi(\btheta^{\!\top}\bd_i')$, $Z = \sum_j\rho_j$, $Z' = \sum_j\rho_j'$.
By Cauchy--Schwarz, $|\btheta^{\!\top}(\bd_i-\bd_i')|\leq\delta$, so the Lipschitz condition gives $|\rho_i-\rho_i'|\leq L_\psi\delta$ and $|Z-Z'|\leq nL_\psi\delta$.
Using $Z,Z'\geq Z_{\min}$ and $w_i=\rho_i/Z\leq 1$:
\[
  |w_i - w_i'|
  \leq \frac{|\rho_i-\rho_i'|}{Z'} + w_i\frac{|Z-Z'|}{Z'}
  \leq \frac{(n+1)L_\psi}{Z_{\min}}\delta =: C_\psi\delta.
\]
Integrating against $|f|$ gives \eqref{eq:lipschitz_component}; the vector bound follows from $\|\cdot\|_2\leq\sqrt{n}\|\cdot\|_\infty$.
\end{proof}

\begin{remark}[Softmax specialization and sharpness--stability trade-off]
\label{rem:tradeoff}
For the softmax kernel, $L_\psi = \varepsilon_T^{-1}e^{1/\varepsilon_T}$ and $Z_{\min} \geq ne^{-1/\varepsilon_T}$, giving
$C_{\psi_\varepsilon} = (n+1)e^{2/\varepsilon_T}/(n\varepsilon_T)$.
This constant grows as $\varepsilon_T \to 0$: sharper weights are more discriminative but more sensitive to perturbations of the basis.
\end{remark}
 
\subsection*{C. Map-Space Properties}
\label{sec:map_space}

\subsection{Stability with Respect to the Map Point}

\begin{proposition}[Continuity of feature components in $\bp$]
\label{prop:continuity_point}
Suppose the environment signal $f_{\bp} : \Sph^{d-1} \to \R$ satisfies:
\begin{enumerate}[label=\textup{(C\arabic*)}, leftmargin=2.2cm]
  \item\label{C1} for $\sigma$-almost every $\btheta \in \Sph^{d-1}$, the map $\bp \mapsto f_{\bp}(\btheta)$ is continuous on $\M$;
  \item\label{C2} there exists $g \in L^1(\Sph^{d-1})$ such that $|f_{\bp}(\btheta)| \leq g(\btheta)$ for all $\bp \in \M$ and $\sigma$-almost every $\btheta$. 
\end{enumerate}
Then for every $i = 1, \ldots, n$, the feature component $\bp \mapsto \phi_i(\bp)$ is continuous on $\M$.
Consequently, the feature vector $\bp \mapsto \bF(\bp)$ is continuous on $\M$.
\end{proposition}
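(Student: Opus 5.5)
The plan is a direct application of the Lebesgue Dominated Convergence Theorem, in the same spirit as the proof of proposition~\ref{prop:continuity}. The only difference is that the signal $f_{\bp}$ now moves while the weights stay fixed. Since $\M\subset\R^d$ is a metric space, continuity at a point is equivalent to sequential continuity. So we fix $\bp_0\in\M$ and an arbitrary sequence $\bp_\nu\to\bp_0$ in $\M$, and aim to show that $\phi_i(\bp_\nu)\to\phi_i(\bp_0)$ for each fixed $i$.

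\textbf{Step 1 (pointwise convergence).} By \ref{C1}, there is a $\sigma$-null set $N\subset\Sph^{d-1}$ such that for every $\btheta\notin N$ the map $\bp\mapsto f_{\bp}(\btheta)$ is continuous on $\M$. It is important that this null set does not depend on $\bp$, which is how \ref{C1} is phrased. It follows that $f_{\bp_\nu}(\btheta)\to f_{\bp_0}(\btheta)$ for every $\btheta\notin N$. The weight $w_i$ does not depend on $\bp$, so $f_{\bp_\nu}(\btheta)w_i(\btheta)\to f_{\bp_0}(\btheta)w_i(\btheta)$ $\sigma$-a.e.

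\textbf{Step 2 (domination).} By proposition~\ref{prop:pou}(i)--(ii) we have $0\le w_i\le 1$. Together with \ref{C2} this gives $|f_{\bp_\nu}(\btheta)w_i(\btheta)|\le g(\btheta)$ for $\sigma$-a.e.\ $\btheta$, with $g\in L^1(\Sph^{d-1})$. The exceptional null set in \ref{C2} should be uniform over all $\bp\in\M$; if it is not, we take the countable union of the null sets attached to $\bp_0,\bp_1,\bp_2,\ldots$, which is still a null set. In particular each $f_{\bp}w_i$ is integrable, so $\phi_i(\bp)$ is well defined. Dominated convergence then yields $\phi_i(\bp_\nu)\to\phi_i(\bp_0)$.

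\textbf{Step 3 (vector statement).} Continuity of $\bF$ follows because a map into $\R^n$ is continuous if and only if each of its $n$ coordinates is continuous.

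The only genuinely delicate point is the measure-theoretic bookkeeping in Steps 1 and 2, namely making sure the exceptional sets are handled correctly. The countability of the sequence $(\bp_\nu)$ takes care of this, which is exactly why I argue sequentially rather than with open neighbourhoods. Measurability of each $f_{\bp}$ is implicit in \ref{C2} and is assumed throughout.
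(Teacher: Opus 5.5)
Your proof is correct and follows essentially the same route as the paper: sequential continuity via the Dominated Convergence Theorem, with \textup{(C1)} giving a.e.\ pointwise convergence and \textup{(C2)} together with $0\le w_i\le 1$ giving the domination. The paper dominates the difference $[f_{\bp_\nu}-f_{\bp_0}]w_i$ by $2g$ rather than each term by $g$, which makes no difference, and your handling of the exceptional null sets is a tightening the paper leaves implicit.
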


\begin{proof}
Let $\bp_\nu \to \bp_0$ in $\M$. The weights $w_i(\btheta)$ are fixed (they depend only on $\mathcal{D}$ and $\varepsilon$, not on $\bp$), so
\[
  \phi_i(\bp_\nu) - \phi_i(\bp_0)
  =  \int_{\Sph^{d-1}} \bigl[f_{\bp_\nu}(\btheta) - f_{\bp_0}(\btheta)\bigr] w_i(\btheta)\dd\sigma(\btheta).
\]
By \ref{C1}, the integrand converges to zero pointwise $\sigma$-a.e.
By \ref{C2} and $0 \leq w_i \leq 1$, it is dominated by $2g(\btheta) \in L^1$.
The Dominated Convergence Theorem gives $\phi_i(\bp_\nu) \to \phi_i(\bp_0)$, and continuity of $\bF(\bp)$ follows since $n$ is finite.
\end{proof}

\begin{remark}
Condition \ref{C1} holds for the range signal when $\M$ is a smooth surface and $\bp$ moves away from surface tangencies.
Condition \ref{C2} is satisfied whenever the map is bounded. 
\end{remark}

In practice the environment signal $f_{\bp}(\btheta)$ — typically a range or occupancy measurement — is \emph{lower semicontinuous} (lsc) in $\bp$:
a new obstacle entering the line of sight causes an instantaneous \emph{decrease} in range, while the signal can only increase continuously as the obstacle recedes.
We therefore replace condition \ref{C1} by the weaker lsc assumption and show that the feature components inherit this property.

\begin{proposition}[Lower semicontinuity of feature components in $\bp$]
\label{prop:lsc_point}
Suppose the environment signal satisfies:
\begin{enumerate}[label=\textup{(L\arabic*)}, leftmargin=2.2cm]
  \item\label{L1lsc} $f_{\bp}(\btheta) \geq 0$ for all $\bp \in \M$ and $\sigma$-almost every $\btheta$;
  \item\label{L2lsc} for $\sigma$-almost every $\btheta$, the map
$\bp \mapsto f_{\bp}(\btheta)$ is lower semicontinuous on $\M$,
i.e.,
$\displaystyle\liminf_{\bp' \to \bp} f_{\bp'}(\btheta)
 \geq f_{\bp}(\btheta)$;
  \item\label{L3lsc} there exists $g \in L^1(\Sph^{d-1})$ such that
$f_{\bp}(\btheta) \leq g(\btheta)$ for all $\bp \in \M$ and $\sigma$-almost every $\btheta$.
\end{enumerate}
Then for every $i = 1, \ldots, n$, the feature component $\bp \mapsto \phi_i(\bp)$ is lower semicontinuous on $\M$.
\end{proposition}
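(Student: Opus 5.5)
The plan is to use Fatou's lemma in place of the Dominated Convergence Theorem from the proof of proposition~\ref{prop:continuity_point}. Fix $i$, fix $\bp_0 \in \M$, and take any sequence $\bp_\nu \to \bp_0$ in $\M$. The aim is to show
\[
  \liminf_{\nu\to\infty}\phi_i(\bp_\nu) \geq \phi_i(\bp_0).
\]
For a metric space $\M$ this sequential statement is equivalent to lower semicontinuity. To reach the infimum, one can first pass to a subsequence realizing $\liminf_\nu \phi_i(\bp_\nu)$.

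First check that each $\phi_i(\bp)$ is a well-defined finite number. By \ref{L1lsc} and \ref{L3lsc} we have $0 \leq f_{\bp} \leq g$ $\sigma$-a.e. Since $0 \leq w_i \leq 1$ by proposition~\ref{prop:pou}, this gives $0 \leq f_{\bp}w_i \leq g \in L^1$. Next, apply Fatou's lemma to the nonnegative measurable functions $h_\nu(\btheta) := f_{\bp_\nu}(\btheta)w_i(\btheta)$. The weight $w_i$ does not depend on $\bp$, so
\[
  \liminf_{\nu} \phi_i(\bp_\nu) = \liminf_\nu \int_{\Sph^{d-1}} h_\nu \dd\sigma \geq \int_{\Sph^{d-1}} \liminf_\nu f_{\bp_\nu}(\btheta) w_i(\btheta)\dd\sigma(\btheta).
\]
Pulling $w_i$ out of the $\liminf$ is valid because $w_i(\btheta) \geq 0$ is a fixed finite constant for each $\btheta$.

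Then use \ref{L2lsc} pointwise. For $\sigma$-a.e. $\btheta$, $\liminf_\nu f_{\bp_\nu}(\btheta) \geq \liminf_{\bp'\to\bp_0} f_{\bp'}(\btheta) \geq f_{\bp_0}(\btheta)$. Multiplying by $w_i(\btheta) \geq 0$ and integrating gives $\int f_{\bp_0}w_i\dd\sigma = \phi_i(\bp_0)$ as the lower bound. The countable union of the exceptional null sets from \ref{L1lsc} and \ref{L2lsc} is still null, so the a.e. statements combine.

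The only delicate point is the role of the hypotheses. Fatou needs nonnegativity, which is supplied by \ref{L1lsc}; without it one would need a lower integrable bound. The integrable majorant \ref{L3lsc} is not actually needed for Fatou itself. Its job is to ensure that $\phi_i(\bp_0)$ is finite, so that the inequality is meaningful and $\phi_i$ is real-valued. Measurability of each $f_{\bp}$ is implicit, since $f_{\bp}$ is assumed integrable. I expect no step to be a real obstacle. The main care is to argue with sequences, or with the subsequence attaining the $\liminf$, rather than with the uncountable net $\bp' \to \bp$, so that Fatou's lemma applies.
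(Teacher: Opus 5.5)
Your proposal is correct and follows essentially the same route as the paper. You fix a sequence $\bp_\nu\to\bp_0$, use $0\le w_i\le 1$ together with (L1)--(L2) to get the pointwise $\liminf$ inequality $\sigma$-a.e., and conclude with Fatou's lemma. Two refinements go beyond the paper and are accurate: you combine the countably many $\bp$-dependent null sets explicitly, and you observe that (L3) only guarantees finiteness of $\phi_i$ rather than being needed for Fatou. Passing to a subsequence that realizes the $\liminf$ is harmless but unnecessary.
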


\begin{proof}
Let $\bp_\nu \to \bp_0$ in $\M$. Since $0 \leq w_i(\btheta) \leq 1$, conditions \ref{L1lsc}--\ref{L2lsc} give
\[
  \liminf_{\nu \to \infty}  f_{\bp_\nu}(\btheta) w_i(\btheta)
  \geq  f_{\bp_0}(\btheta) w_i(\btheta)
  \qquad \sigma\text{-a.e.}
\]
The product $f_{\bp_\nu}(\btheta)w_i(\btheta)$ is non-negative and bounded above by $g(\btheta) \in L^1$ via \ref{L3lsc}.
Fatou's lemma therefore gives
\begin{align*}
  \liminf_{\nu \to \infty}\phi_i(\bp_\nu)
  &= \liminf_{\nu \to \infty}  \int_{\Sph^{d-1}} f_{\bp_\nu}(\btheta)w_i(\btheta)\dd\sigma(\btheta) \\
  &\geq  \int_{\Sph^{d-1}}  \liminf_{\nu \to \infty}  \bigl[f_{\bp_\nu}(\btheta)w_i(\btheta)\bigr]  \dd\sigma(\btheta) \\
  &\geq  \int_{\Sph^{d-1}} f_{\bp_0}(\btheta)w_i(\btheta)\dd\sigma(\btheta)
  = \phi_i(\bp_0),
\end{align*}
which is precisely the lower semicontinuity of $\phi_i$ at $\bp_0$.

\end{proof}

\begin{remark}
Condition \ref{L2lsc} captures the geometry of occlusion: as $\bp$ approaches a surface boundary, new obstacles may suddenly occlude a direction $\btheta$, causing $f_{\bp}(\btheta)$ to jump downward. The lsc assumption allows exactly such downward jumps while forbidding upward ones, which would correspond to obstacles spontaneously disappearing — a non-physical event.
Condition \ref{L3lsc} is a mild upper bound, satisfied whenever $\M$ is contained in a bounded region of $\R^d$.
\end{remark}

\subsection{Differentiability with Respect to the Map Point}
\label{sec:diff_point}

When the directional basis $\mathcal{D}$ is fixed, the weights $\{w_i\}$ do not depend on $\bp$, so differentiating $\phi_i(\bp)$ in $\bp$ reduces entirely to differentiating under the integral sign. The result gives an explicit formula for the Jacobian of $\bF(\bp)$ as a PoU-weighted average of the signal gradients.

\begin{proposition}[Fréchet differentiability of feature components]
\label{prop:diff_point}
Fix the directional basis $\mathcal{D}$ and suppose:
\begin{enumerate}[label=\textup{(D\arabic*)}, leftmargin=2.2cm]
  \item\label{D1} for $\sigma$-almost every $\btheta \in \Sph^{d-1}$, the map $\bp \mapsto f_{\bp}(\btheta)$ is differentiable on $\M$ with gradient $\nabla_{\!\bp}f_{\bp}(\btheta) \in \R^d$;
  \item\label{D2} there exists $h \in L^1(\Sph^{d-1})$ such that
$\|\nabla_{\!\bp}f_{\bp}(\btheta)\|_2 \leq h(\btheta)$ for all $\bp \in \M$ and $\sigma$-a.e.\ $\btheta$.
\end{enumerate}
Then for every $i = 1,\ldots,n$, the map $\bp \mapsto \phi_i(\bp)$ is Fréchet differentiable on $\M$ with
\begin{equation}
  \nabla_{\!\bp}\phi_i(\bp)
  =  \int_{\Sph^{d-1}} \nabla_{\!\bp}f_{\bp}(\btheta) w_i(\btheta)   \dd\sigma(\btheta)  \in \R^d.
  \label{eq:grad_component}
\end{equation}
Consequently, $\bF(\bp) \in \R^n$ is Fréchet differentiable and its Jacobian $J_{\bF}(\bp) \in \R^{n \times d}$ is
\begin{equation}
  J_{\bF}(\bp)  =   \int_{\Sph^{d-1}} \nabla_{\!\bp}f_{\bp}(\btheta)\mathbf{w}(\btheta)^{\!\top}   \dd\sigma(\btheta),
  \label{eq:jacobian}
\end{equation}
where $\mathbf{w}(\btheta) = [w_1(\btheta),\ldots,w_n(\btheta)]^\top$.
\end{proposition}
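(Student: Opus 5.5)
The plan is to differentiate under the integral sign, with the Mean Value Theorem and the Dominated Convergence Theorem doing the work. The weights $w_i$ do not depend on $\bp$ and satisfy $0\le w_i\le 1$ by proposition~\ref{prop:pou}. So the candidate derivative is the linear map $\bv \mapsto \int_{\Sph^{d-1}} \nabla_{\!\bp}f_{\bp}(\btheta)^{\!\top}\bv\, w_i(\btheta)\dd\sigma(\btheta)$. This integral is well defined because, by \ref{D2}, its integrand is bounded by $h(\btheta)\|\bv\|_2$ with $h\in L^1$. We must also check that the integrand is measurable in $\btheta$. It is an a.e.\ limit of difference quotients $t^{-1}(f_{\bp+t\bv}(\btheta)-f_{\bp}(\btheta))$, each of which is measurable.

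Next we fix $\bp$ in the interior of $\M$, or at least in a neighborhood $B(\bp,r)$ on which the hypotheses hold. The convexity of a ball is what allows the Mean Value Theorem. For $\|\bv\|_2<r$ we write
\[
\frac{\phi_i(\bp+\bv)-\phi_i(\bp)-\bv^{\!\top}\nabla_{\!\bp}\phi_i(\bp)}{\|\bv\|_2}
=\int_{\Sph^{d-1}} R_{\bv}(\btheta)\,w_i(\btheta)\dd\sigma(\btheta),
\]
where
\[
R_{\bv}(\btheta):=\frac{f_{\bp+\bv}(\btheta)-f_{\bp}(\btheta)-\nabla_{\!\bp}f_{\bp}(\btheta)^{\!\top}\bv}{\|\bv\|_2}.
\]
Two facts about $R_{\bv}$ then finish the argument. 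First, by \ref{D1}, $R_{\bv}(\btheta)\to 0$ for a.e.\ $\btheta$ as $\bv\to 0$. Second, $|R_{\bv}(\btheta)|\le 2h(\btheta)$. The second bound follows by applying the Mean Value Theorem to $t\mapsto f_{\bp+t\bv}(\btheta)$ on $[0,1]$: the difference $f_{\bp+\bv}-f_{\bp}$ equals $\nabla_{\!\bp}f_{\bp+t^*\bv}(\btheta)^{\!\top}\bv$ for some $t^*\in(0,1)$, which is bounded by $h(\btheta)\|\bv\|_2$ by \ref{D2}. With these two facts, dominated convergence along any sequence $\bv_\nu\to 0$ makes the quotient tend to zero. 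This gives Fréchet differentiability with gradient \eqref{eq:grad_component}.

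The Jacobian formula \eqref{eq:jacobian} then follows by stacking the $n$ gradients as rows, since the $i$-th row is $\int \nabla_{\!\bp}f_{\bp}^{\!\top}w_i\dd\sigma$. A vector-valued map into $\R^n$ is Fréchet differentiable precisely when each component is.

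The main obstacle is the domination step. The Mean Value Theorem requires differentiability of $\bp\mapsto f_{\bp}(\btheta)$ along the whole segment $[\bp,\bp+\bv]$ for a.e.\ $\btheta$, with a single null set. However, \ref{D1} as stated gives a null set that may depend on $\bp$. I would handle this by reading \ref{D1} as "for a.e.\ $\btheta$, the map is differentiable on all of $\M$", so that one exceptional null set works for every $\bp$. I would also assume $\M$ is open, or at least that each point has a convex neighborhood in $\M$. If only directional differentiability along segments is available, an alternative is to use the fundamental theorem of calculus in integral form together with Fubini. That route would need absolute continuity along segments, which \ref{D1}–\ref{D2} give under the same interpretation.
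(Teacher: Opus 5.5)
Your proposal is correct and follows essentially the same route as the paper. Both arguments form the candidate gradient as an integral, bound the normalized remainder by $2h(\btheta)$ through a mean-value estimate along the segment $\{\bp+t\mathbf{v}: t\in[0,1]\}\subset\M$ using (D2), and conclude by dominated convergence; the paper also tacitly needs that segment to lie in $\M$, as you make explicit.

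One small remark: (D1) as written already has a single exceptional null set. The quantifier subtlety you raise really sits in (D2), whose bound must hold at the $\btheta$-dependent intermediate point $\bp+t^*\mathbf{v}$, and the paper relies on that uniform reading implicitly just as you do.
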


\begin{proof}
Fix $i$. Define the candidate gradient $\mathbf{g}_i(\bp) := \int \nabla_{\!\bp} f_{\bp}(\btheta)w_i(\btheta)\dd\sigma(\btheta)$, which is well defined in $\R^d$ by \ref{D2} and $0 \leq w_i \leq 1$.

We show that $\phi_i$ is Fréchet differentiable at every $\bp \in \M$ with derivative $\mathbf{g}_i(\bp)$, i.e.,
\begin{equation}
  \frac{\phi_i(\bp + \mathbf{v}) - \phi_i(\bp)   - \mathbf{g}_i(\bp)^{\!\top}\mathbf{v}}{\|\mathbf{v}\|_2}
  \longrightarrow 0
  \qquad \text{as } \mathbf{v} \to \mathbf{0}.
  \label{eq:frechet}
\end{equation}

\medskip
For $\sigma$-a.e.\ $\btheta$, condition \ref{D1} gives
\[
  f_{\bp+\mathbf{v}}(\btheta) - f_{\bp}(\btheta)
  = \nabla_{\!\bp}f_{\bp}(\btheta)^{\!\top}\mathbf{v}  + r(\btheta, \mathbf{v}),
\]
where the pointwise remainder satisfies $r(\btheta, \mathbf{v}) = o(\|\mathbf{v}\|_2)$ as $\mathbf{v} \to \mathbf{0}$, i.e., $|r(\btheta, \mathbf{v})| / \|\mathbf{v}\|_2 \to 0$.
Hence
\begin{equation}
  \phi_i(\bp+\mathbf{v}) - \phi_i(\bp) - \mathbf{g}_i(\bp)^{\!\top}\mathbf{v}
  =  \int_{\Sph^{d-1}} r(\btheta, \mathbf{v}) w_i(\btheta)\dd\sigma(\btheta).
  \label{eq:remainder_integral}
\end{equation}

\medskip
Assume $\mathbf{v}$ is small enough that the segment $\{\bp + t\mathbf{v} : t \in [0,1]\} \subset \M$.
By the multivariable mean-value inequality
\[
  |f_{\bp+\mathbf{v}}(\btheta) - f_{\bp}(\btheta)|
  \leq   \|\mathbf{v}\|_2   \sup_{t \in [0,1]}\|\nabla_{\!\bp}f_{\bp+t\mathbf{v}}(\btheta)\|_2
  \leq h(\btheta)\|\mathbf{v}\|_2,
\]
where the last inequality uses \ref{D2} applied at each $\bp + t\mathbf{v} \in \M$.
By the Cauchy--Schwarz inequality,
$|\nabla_{\!\bp}f_{\bp}(\btheta)^{\!\top}\mathbf{v}|  \leq h(\btheta)\|\mathbf{v}\|_2$.
Combining via the triangle inequality,
$|r(\btheta,\mathbf{v})| \leq 2h(\btheta)\|\mathbf{v}\|_2$, so $|r(\btheta,\mathbf{v})|/\|\mathbf{v}\|_2 \leq 2h(\btheta) \in L^1$ uniformly in $\mathbf{v} \neq \mathbf{0}$.

\medskip
Dividing \eqref{eq:remainder_integral} by $\|\mathbf{v}\|_2$ and applying the Dominated Convergence Theorem:
\[
  \frac{1}{\|\mathbf{v}\|_2}
  \int_{\Sph^{d-1}} r(\btheta, \mathbf{v}) w_i(\btheta)\dd\sigma(\btheta)
  \longrightarrow  \int_{\Sph^{d-1}} \lim_{\mathbf{v}\to\mathbf{0}}
  \frac{r(\btheta, \mathbf{v})}{\|\mathbf{v}\|_2}  w_i(\btheta)\dd\sigma(\btheta)
  = 0,
\]
which establishes \eqref{eq:frechet} and hence \eqref{eq:grad_component}.
Stacking the $n$ row vectors $\nabla_{\!\bp}\phi_i(\bp)^\top$ gives
\eqref{eq:jacobian}.
\end{proof}

\begin{remark}[Jacobian as a PoU-weighted average of signal gradients]
\label{rem:jacobian}
Equation \eqref{eq:jacobian} shows that $J_{\bF}(\bp)$ is a weighted average of the signal gradient field $\btheta \mapsto \nabla_{\!\bp}f_{\bp}(\btheta)$ over $\Sph^{d-1}$, with blending mask $\mathbf{w}(\btheta)$.
The $i$-th row $\nabla_{\!\bp}\phi_i(\bp)^\top$ captures the sensitivity of the $i$-th channel to a displacement of $\bp$: it is large when the signal changes rapidly in directions that $w_i$ weights heavily.
In particular, summing all rows and using the partition-of-unity property $\sum_i w_i = 1$ recovers the gradient of the zeroth moment:
\begin{equation}
  \mathbf{1}_n^\top J_{\bF}(\bp) 
  = \sum_{i=1}^n \nabla_{\!\bp}\phi_i(\bp)^\top
  =  \int_{\Sph^{d-1}} \nabla_{\!\bp}f_{\bp}(\btheta)\dd\sigma(\btheta)
  =  \nabla_{\!\bp} E(\bp)^\top,
  \label{eq:energy_grad}
\end{equation}
where $E(\bp) = \int f_{\bp}\dd\sigma$ is the zeroth moment of the signal.
Thus the row sum of the Jacobian is basis-free: it does not depend on $\mathcal{D}$ or $\varepsilon$.
\end{remark}

\begin{remark}[Range signal]
\label{rem:range_gradient}
For the range signal $f_{\bp}(\btheta) = r(\bp,\btheta)$ — the distance from $\bp$ to the first surface intersection in direction $\btheta$ — a ray-transfer argument yields a closed-form expression for $\nabla_{\!\bp}r$.
Let $\mathbf{x}(\bp,\btheta) = \bp + r(\bp,\btheta)\btheta$ be the hit point, lying on a smooth surface $\mathcal{S}$ with inward unit normal $\mathbf{n}(\btheta)$ at $\mathbf{x}$.
Differentiating the surface constraint $\mathbf{n}(\btheta)^{\!\top}(\mathbf{x}(\bp+\mathbf{v},\btheta) - \mathbf{x}(\bp,\btheta)) = 0$ with respect to $\mathbf{v}$ and using $\mathbf{x}(\bp+\mathbf{v},\btheta) = (\bp+\mathbf{v}) + r(\bp+\mathbf{v},\btheta)\btheta$, one obtains
\[
  \mathbf{n}(\btheta)^{\!\top}
  \bigl(\mathbf{v} + \dd r\cdot\btheta\bigr) = 0,
  \qquad\Longrightarrow\qquad
  \dd r = -\frac{\mathbf{n}(\btheta)^{\!\top}\mathbf{v}}
   {\mathbf{n}(\btheta)^{\!\top}\btheta},
\]
valid when $\mathbf{n}(\btheta)^{\!\top}\btheta \neq 0$ (non-grazing incidence).
Hence
\begin{equation}
  \nabla_{\!\bp}r(\bp,\btheta)
  =  -\frac{\mathbf{n}(\btheta)}{\mathbf{n}(\btheta)^{\!\top}\btheta},
  \label{eq:range_gradient}
\end{equation}
and substituting into \eqref{eq:grad_component} gives the closed-form expression
\begin{equation}
  \nabla_{\!\bp}\phi_i(\bp)
  =   -\int_{\Sph^{d-1}}
\frac{\mathbf{n}(\btheta)}{\mathbf{n}(\btheta)^{\!\top}\btheta}w_i(\btheta)\dd\sigma(\btheta),
  \label{eq:range_grad_component}
\end{equation}
provided the set of grazing directions $\{\btheta : \mathbf{n}(\btheta)^{\!\top}\btheta = 0\}$ has $\sigma$-measure zero, which holds generically when $\mathcal{S}$ is a smooth compact surface.
\end{remark}

\section{Map-Based Localization via Descriptor Matching}
\label{sec:localization}

We now develop the localization application. The central object is the \emph{observability Gramian}, which quantifies how much positional information the descriptor accumulates along a robot trajectory. 
In the static regime (single observation), the Gramian degenerates to a pointwise quantity and localization may fail in symmetric environments.
In the dynamic regime (robot in motion), the Gramian grows monotonically with time and generically resolves all ambiguities.

\subsection{Descriptor Grid and Static Localization}
\label{sec:static_loc}

\begin{definition}[Descriptor grid]
\label{def:grid}
Let $h > 0$.
A \emph{descriptor grid} of $\M$ at resolution $h$ is a finite set
\[
  \mathcal{G}_h
  :=  \bigl\{(\bp_k,\bF(\bp_k))\bigr\}_{k=1}^{K}   \subset \M \times \R^n,
\]
where $\{\bp_k\}_{k=1}^K$ is an $h$-covering of $\M$, i.e. $\sup_{\bp \in \M}\min_k \|\bp - \bp_k\|_2 \leq h$.
The grid is built offline by evaluating $\bF$ at each grid point.
\end{definition}

\begin{definition}[Static localization]
\label{def:static_loc}
Given $\mathcal{G}_h$ and a query descriptor $\bF_{\mathrm{obs}} \in \R^n$ observed at an unknown location $\bp^* \in \M$, the \emph{static localization problem} is
\begin{equation}
  \hat{\bp}
  =  \argmin_{\bp_k \in \mathcal{G}_h}   \bigl\|\bF(\bp_k) - \bF_{\mathrm{obs}}\bigr\|_2.
  \label{eq:static_loc}
\end{equation}
\end{definition}

The quality of \eqref{eq:static_loc} depends on how well $\bF$ separates nearby points.
Define the \emph{local injectivity radius} at $\bp^*$ as
\begin{equation}
  \mu(\bp^*)
  :=  \sigma_{\min}\!\bigl(J_{\bF}(\bp^*)\bigr)
  \geq 0,
  \label{eq:local_injectivity}
\end{equation}
where $\sigma_{\min}$ denotes the smallest singular value of the Jacobian.
When $\mu(\bp^*) > 0$, the map $\bF$ is locally injective at $\bp^*$ and the following error bound holds.

\begin{proposition}[Static localization error]
\label{prop:static_error}
Suppose $\bF \in \mathcal{C}^1(\M)$ with $\sup_{\bp \in \M}\|J_{\bF}(\bp)\|_{\mathrm{op}} \leq L$, and let $\mu = \mu(\bp^*) > 0$.
If the observation satisfies $\|\bF_{\mathrm{obs}} - \bF(\bp^*)\|_2 \leq \eta$, then for $h$ and $\eta$ small enough,
\begin{equation}
  \|\hat{\bp} - \bp^*\|_2
  \leq  \frac{L\,h + \eta}{\mu}  + o(h + \eta).
  \label{eq:static_error}
\end{equation}
\end{proposition}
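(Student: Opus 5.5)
The plan is to linearize $\bF$ at $\bp^*$ and pick a comparison grid point more cleverly than the grid point nearest to $\bp^*$. The naive comparison, combined with two triangle inequalities, only gives $(Lh+2\eta)/\mu$. To obtain the sharp constant $\eta$ in \eqref{eq:static_error}, I will split the observation error into its part lying in the range of the Jacobian and its orthogonal part. Write $J := J_{\bF}(\bp^*)$ and $\mathbf{e} := \bF_{\mathrm{obs}} - \bF(\bp^*)$, so that $\|\mathbf{e}\|_2\le\eta$. Let $\Pi$ be the orthogonal projector of $\R^n$ onto $\mathrm{range}(J)$, and set $\mathbf{e}_R := \Pi\mathbf{e}$ and $\mathbf{e}_\perp := \mathbf{e}-\mathbf{e}_R$. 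Since $\mu=\sigma_{\min}(J)>0$ and $d\le n$, $J$ has full column rank. Its pseudo-inverse $J^+$ therefore satisfies $JJ^+=\Pi$ and $\|J^+\|_{\mathrm{op}}=1/\mu$. By $\mathcal{C}^1$-regularity (Fréchet differentiability, proposition~\ref{prop:diff_point}), for $\bp$ near $\bp^*$,
\[
\bF(\bp)-\bF_{\mathrm{obs}} = J(\bp-\bp^*) - \mathbf{e}_R - \mathbf{e}_\perp + o(\|\bp-\bp^*\|_2).
\]

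\textbf{Step 1 (comparison point).} Let $\mathbf{q} := \bp^* + J^+\mathbf{e}_R$, so that $\|\mathbf{q}-\bp^*\|_2\le \eta/\mu$. This is small for small $\eta$, and $\mathbf{q}\in\M$ because $\bp^*$ is an interior point. By the $h$-covering property, choose a grid point $\bp_{k_0}$ with $\|\bp_{k_0}-\mathbf{q}\|_2\le h$, and set $\mathbf{u}:=\bp_{k_0}-\bp^*$. Then $J\mathbf{u}-\mathbf{e}_R = J(\bp_{k_0}-\mathbf{q})$, which has norm at most $Lh$. Because $J\mathbf{u}-\mathbf{e}_R\in\mathrm{range}(J)$ is orthogonal to $\mathbf{e}_\perp$, Pythagoras gives
\[
\|\bF(\bp_{k_0})-\bF_{\mathrm{obs}}\|_2^2 \le (Lh)^2+\|\mathbf{e}_\perp\|_2^2 + o(h+\eta).
\]

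\textbf{Step 2 (use minimality).} Set $\boldsymbol\delta:=\hat\bp-\bp^*$. Since $\hat\bp$ minimizes \eqref{eq:static_loc}, the same Pythagorean splitting at $\hat\bp$ gives $\|J\boldsymbol\delta-\mathbf{e}_R\|_2^2+\|\mathbf{e}_\perp\|_2^2 \le (Lh)^2+\|\mathbf{e}_\perp\|_2^2+o$. The $\mathbf{e}_\perp$ terms cancel, leaving $\|J\boldsymbol\delta-\mathbf{e}_R\|_2\le Lh+o$. One triangle inequality then yields $\mu\|\boldsymbol\delta\|_2\le\|J\boldsymbol\delta\|_2\le Lh+\|\mathbf{e}_R\|_2+o\le Lh+\eta+o(h+\eta)$. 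This is exactly \eqref{eq:static_error}. The $o$-terms are controlled by the uniform continuity of $J_{\bF}$ on a compact neighbourhood of $\bp^*$ (using $\mathcal{C}^1$), applied along segments via the mean-value inequality.

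\textbf{Main obstacle.} Step 2 presupposes that $\hat\bp$ lies in the neighbourhood $U$ of $\bp^*$ where the linearization is valid, so that $\|\boldsymbol\delta\|_2$ is itself $O(h+\eta)$ and the remainder is genuinely $o(h+\eta)$. I will read ``for $h$ and $\eta$ small enough'' in the sense that the argmin in \eqref{eq:static_loc} is taken over the grid points in such a neighbourhood; this is the local regime the proposition addresses, in which $\mu(\bp^*)>0$ is the only injectivity information available. Concretely, I will first shrink $U$ so that $\sigma_{\min}(J_{\bF}(\bp))\ge\mu/2$ on $U$. Then $\bF$ is bi-Lipschitz on $U$, so the crude bound $\|\bF(\hat\bp)-\bF(\bp^*)\|_2\le Lh+2\eta$, which comes from Step 1 and the triangle inequality, already forces $\|\boldsymbol\delta\|_2\le 2(Lh+2\eta)/\mu$. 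With that a priori estimate in hand, the refined Step 2 applies and produces the sharp constant.
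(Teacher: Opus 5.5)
Your proof is correct in substance, subject to two small repairs listed at the end, and it takes a genuinely different route from the paper.

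\textbf{How the two routes differ.} The paper starts from the claim $\|\hat\bp-\bp^*\|_2\le h$ ``by the $h$-covering property''. From it, the paper deduces $\|\bF(\hat\bp)-\bF(\bp^*)\|_2\le Lh+\eta$. It then converts this to position space by applying the Inverse Function Theorem to the square map $\Phi=J_\bF(\bp^*)^+\bF$, which gives $\|\bp-\bp^*\|_2\le\mu^{-1}\|\bF(\bp)-\bF(\bp^*)\|_2+o(\cdot)$ for every $\bp$ near $\bp^*$.

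The covering property only places \emph{some} grid point within $h$ of $\bp^*$, not the descriptor minimizer. If the claim held, the proposition would be trivial, since $L\ge\|J_\bF(\bp^*)\|_{\mathrm{op}}\ge\mu$. Done honestly, the paper's route reduces to your naive comparison and yields $(Lh+2\eta)/\mu$.

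The paper's descriptor-gap inequality is in fact true up to $o(h+\eta)$. However, the way to prove it is your mechanism: split $\mathbf e$ along $\mathrm{range}(J)$, compare with a grid point near the shifted point $\bp^*+J^+\mathbf e_R$, and let minimality cancel $\|\mathbf e_\perp\|_2^2$. That term is the part of the noise that, to first order, penalizes every candidate equally. So your argument is what actually earns the constant $\eta$, and it needs only $\|J\bdelta\|_2\ge\mu\|\bdelta\|_2$ rather than the Inverse Function Theorem. The paper's IFT estimate has the convenience of being selection-agnostic, but for that very reason it cannot exploit minimality.

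\textbf{The localization issue.} You are right that both routes need $\hat\bp$ to lie in a fixed neighbourhood of $\bp^*$ a priori; the paper gets this only from its unjustified first line. Restricting the argmin weakens the literal statement, but some such hypothesis is unavoidable. If $\bF(\bq)=\bF(\bp^*)$ for a distant $\bq$ (e.g.\ $\M$ contains two congruent closed rooms), a grid containing $\bq$ but not $\bp^*$ makes $\bq$ a minimizer in \eqref{eq:static_loc} with $\eta=0$, for every $h$.

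If you want to keep the global argmin, assume instead that $\bF$ is injective on a compact $\M$. Continuity of $\bF^{-1}$ on $\bF(\M)$ then turns $\|\bF(\hat\bp)-\bF(\bp^*)\|_2\le Lh+2\eta$ into $\hat\bp\to\bp^*$, and your local argument applies unchanged.

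\textbf{Two repairs.} First, the remainders in your squared inequalities must be $o\bigl((h+\eta)^2\bigr)$, not $o(h+\eta)$. Otherwise you cannot pass from $\|J\bdelta-\mathbf e_R\|_2^2\le(Lh)^2+o$ to $\|J\bdelta-\mathbf e_R\|_2\le Lh+o(h+\eta)$. The stronger order does hold. Expanding $\|\mathbf m+R\|_2^2$, the extra terms are $2\langle\mathbf m,R\rangle+\|R\|_2^2$, with $\|\mathbf m\|_2=O(h+\eta)$ and $\|R\|_2=o(h+\eta)$; at $\hat\bp$ the bound on $\mathbf m$ uses your a priori estimate. Then use $\sqrt{a^2+b}\le a+\sqrt b$.

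Second, a pointwise bound $\sigma_{\min}(J_\bF)\ge\mu/2$ on $U$ does not by itself give injectivity; consider $p\mapsto(\cos p,\sin p)$. Take $U$ to be a ball on which $\|J_\bF(\bp)-J\|_{\mathrm{op}}\le\mu/2$. The integral form of the mean-value theorem then gives $\|\bF(\bp)-\bF(\bp^*)\|_2\ge\tfrac{\mu}{2}\|\bp-\bp^*\|_2$, which is all your a priori estimate needs.
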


\begin{proof}
By the $h$-covering property, $\|\hat\bp-\bp^*\|_2\le h$ and, using the Lipschitz bound $\|J_\bF\|_{\mathrm{op}}\le L$, $\|\bF(\hat\bp)-\bF(\bp^*)\|_2\le Lh$. By the triangle inequality,
\begin{equation}
  \|\bF(\hat\bp)-\bF(\bp^*)\|_2 \le Lh+\eta.
  \label{eq:Fgap}
\end{equation}

It remains to invert this descriptor-space bound back to position space. Since $\bF:\M\to\R^n$ maps into a higher-dimensional space ($n>d$), the classical Inverse Function Theorem does not apply directly to $\bF$; we instead apply it to a square auxiliary map built from the Moore--Penrose pseudoinverse of $J_\bF(\bp^*)$.

Because $\mu=\sigma_{\min}(J_\bF(\bp^*))>0$, the Jacobian $J_\bF(\bp^*)$ has full column rank $d$, so its Moore--Penrose pseudoinverse~\cite{BenIsraelGreville2003}
\begin{equation}
  \pi := J_\bF(\bp^*)^+ = \bigl(J_\bF(\bp^*)^\top J_\bF(\bp^*)\bigr)^{-1} J_\bF(\bp^*)^\top \in \R^{d\times n}
  \label{eq:pseudoinv}
\end{equation}
is well defined and satisfies $\pi\,J_\bF(\bp^*)=I_d$ and $\|\pi\|_{\mathrm{op}}=\mu^{-1}$.

Define the auxiliary map $\Phi:\M\to\R^d$, $\Phi(\bp):=\pi\,\bF(\bp)$. By the chain rule, $D\Phi(\bp^*)=\pi\,J_\bF(\bp^*)=I_d$, which is invertible. The classical Inverse Function Theorem therefore applies to $\Phi$ at $\bp^*$: there is a neighborhood of $\Phi(\bp^*)$ on which $\Phi$ has a $\mathcal C^1$ local inverse $\Phi^{-1}$, with
\begin{equation}
  D\Phi^{-1}\bigl(\Phi(\bp^*)\bigr) = \bigl[D\Phi(\bp^*)\bigr]^{-1} = I_d.
  \label{eq:invIFT}
\end{equation}

A first-order expansion of $\Phi^{-1}$ at $\Phi(\hat\bp)$ around $\Phi(\bp^*)$, using \eqref{eq:invIFT}, gives
\begin{equation}
  \hat\bp-\bp^*
  = \Phi^{-1}\bigl(\Phi(\hat\bp)\bigr)-\Phi^{-1}\bigl(\Phi(\bp^*)\bigr)
  = \pi\bigl(\bF(\hat\bp)-\bF(\bp^*)\bigr) + o\bigl(\|\bF(\hat\bp)-\bF(\bp^*)\|_2\bigr).
  \label{eq:taylor_inv}
\end{equation}

Taking norms in \eqref{eq:taylor_inv} and using $\|\pi\|_{\mathrm{op}}=\mu^{-1}$,
\begin{equation}
  \|\hat\bp-\bp^*\|_2
  \le \frac{1}{\mu}\,\|\bF(\hat\bp)-\bF(\bp^*)\|_2 + o\bigl(\|\bF(\hat\bp)-\bF(\bp^*)\|_2\bigr).
  \label{eq:almost_there}
\end{equation}
Substituting the descriptor-space bound \eqref{eq:Fgap} into \eqref{eq:almost_there}, and noting that $\|\bF(\hat\bp)-\bF(\bp^*)\|_2\to 0$ as $h,\eta\to 0$ so that the $o(\cdot)$ term is $o(h+\eta)$, yields
\[
  \|\hat\bp-\bp^*\|_2 \le \frac{Lh+\eta}{\mu} + o(h+\eta),
\]
which is \eqref{eq:static_error}.
\end{proof}

\begin{remark}[Failure at $\mu(\bp^*) = 0$]
The bound \eqref{eq:static_error} degenerates when $\mu(\bp^*) = 0$, i.e.\ when $J_{\bF}(\bp^*)$ has a non-trivial null space.
This means $\bF$ is locally flat along some direction from $\bp^*$: displacement along that direction leaves the descriptor unchanged.
This occurs systematically in environments with translational symmetry (corridors, circular rooms).
The dynamic regime, developed below, resolves this failure by accumulating derivative information over time.
\end{remark}
\subsection{Dynamic Localization and the Observability Gramian}
\label{sec:dynamic_loc}

Suppose the robot follows a $\mathcal{C}^1$ trajectory $\bp : [0,T] \to \M$ with velocity $\dot{\bp}(t) = \mathbf{v}(t)$ measured by onboard odometry.
By proposition~\ref{prop:diff_point}, the descriptor evolves according to the \emph{descriptor dynamics}:
\begin{equation}
  \dot{\bF}(t)
  :=  \frac{\dd}{\dd t}\bF(\bp(t))
  =  J_{\bF}(\bp(t))\mathbf{v}(t),
  \qquad t \in [0,T].
  \label{eq:descriptor_dynamics}
\end{equation}
The robot thus generates a continuous stream of observations $\{(\bF(\bp(t)),\dot{\bF}(t))\}_{t \in [0,T]}$, each constraining the unknown initial position $\bp(0)$.

\begin{definition}[Observability Gramian]
\label{def:gramian}
The \emph{observability Gramian} along the trajectory $\{\bp(t)\}_{t \in [0,T]}$ is
\begin{equation}
  \mathcal{W}(0,T)
  :=  \int_0^T J_{\bF}(\bp(t))^{\!\top} J_{\bF}(\bp(t))  \dd t
  \in \R^{d \times d}.
  \label{eq:gramian}
\end{equation}
\end{definition}

Three immediate properties of $\mathcal{W}(0,T)$:
\begin{enumerate}[label=\textup{(\roman*)}]
  \item $\mathcal{W}(0,T) \succeq 0$ for all $T \geq 0$;
  \item $T \mapsto \mathcal{W}(0,T)$ is monotone non-decreasing in the L\"{o}wner order~\cite{Bhatia1997}: each infinitesimal arc adds a rank-one update
$J_{\bF}(\bp(t))^{\!\top} J_{\bF}(\bp(t))\dd t \succeq 0$;
  \item the null space of $\mathcal{W}(0,T)$ consists of directions
$\mathbf{u}$ along which $J_{\bF}(\bp(t))\mathbf{u} = \mathbf{0}$ for $\lambda$-a.e.\ $t$ — directions permanently invisible to the descriptor along the entire trajectory.
\end{enumerate}

\begin{proposition}[Dynamic localization error bound]
\label{prop:dynamic_bound}
Let $\bp(t)$ be the true robot trajectory and $\bq(t)$ a candidate.
Assume:
\begin{enumerate}[label=\textup{(T\arabic*)}, leftmargin=2.2cm]
  \item\label{T1}\textbf{Same velocity.} $\dot{\bp}(t) = \dot{\bq}(t) = \mathbf{v}(t)$ for all $t \in [0,T]$, so that $\bdelta_0 := \bp(0) - \bq(0) = \bp(t) - \bq(t)$ is constant.
  \item\label{T2}\textbf{Bounded Jacobian.} $\sup_{t}\|J_{\bF}(\bp(t))\|_{\mathrm{op}} \leq L$.
  \item\label{T3}\textbf{Lipschitz Jacobian.} $\|J_{\bF}(\bp) - J_{\bF}(\bq)\|_{\mathrm{op}} \leq \Lambda\|\bp - \bq\|_2$ for all $\bp,\bq$ on the trajectory.
  \item\label{T4}\textbf{Observability.} $\lambda_{\min}(\mathcal{W}(0,T)) \geq \mu_T > 0$.
  \item\label{T5}\textbf{Smallness.} $\|\bdelta_0\|_2 \leq \mu_T / (\Lambda L T)$.
\end{enumerate}
Then
\begin{equation}
  \|\bp(0) - \bq(0)\|_2
  \leq  \frac{2L}{\mu_T}  \int_0^T  \bigl\|\bF(\bp(t)) - \bF(\bq(t))\bigr\|_2\dd t.
  \label{eq:dynamic_bound}
\end{equation}
In particular, $\bF(\bp(t)) = \bF(\bq(t))$ for all $t$ implies $\bp(0) = \bq(0)$.
\end{proposition}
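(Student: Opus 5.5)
The plan is to compare the descriptor difference along the two trajectories with the linearization $J_{\bF}(\bp(t))\bdelta_0$, and then use the Gramian to invert it. The nonlinear remainder will be absorbed using the smallness hypothesis \ref{T5}. Throughout, write $\bdelta := \bdelta_0$, which is constant in $t$ by \ref{T1}, and $\Delta\bF(t) := \bF(\bp(t)) - \bF(\bq(t))$. If $\bdelta = \mathbf{0}$ there is nothing to prove, so assume $\bdelta \neq \mathbf{0}$.

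\textbf{Step 1 (first-order expansion with remainder).} Fix $t$. Since $\bF$ is Fréchet differentiable by proposition~\ref{prop:diff_point}, the fundamental theorem of calculus along the segment from $\bq(t)$ to $\bp(t)$ gives
\[
  \Delta\bF(t) = \int_0^1 J_{\bF}\bigl(\bq(t)+s\bdelta\bigr)\bdelta\,\dd s
  = J_{\bF}(\bp(t))\bdelta + \mathbf{e}(t).
\]
The remainder is $\mathbf{e}(t) = \int_0^1 \bigl[J_{\bF}(\bq(t)+s\bdelta) - J_{\bF}(\bp(t))\bigr]\bdelta\,\dd s$. By \ref{T3}, the bracket has operator norm at most $\Lambda(1-s)\|\bdelta\|_2$, so $\|\mathbf{e}(t)\|_2 \le \tfrac{\Lambda}{2}\|\bdelta\|_2^2$.

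This step is where I expect the only genuine obstacle. It needs the segment $[\bq(t),\bp(t)]$ to lie in $\M$, which requires $\M$ to be locally convex or $\bdelta$ small. It also needs \ref{T3} to hold at the intermediate points, not only at points on the trajectory. I would read \ref{T3} as holding on a neighbourhood containing these segments, which is consistent with \ref{T5} forcing $\|\bdelta\|_2$ to be small. I would state this interpretation explicitly rather than try to avoid it.

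\textbf{Step 2 (lower bound via the Gramian).} By definition~\ref{def:gramian} and \ref{T4},
\[
  \mu_T\|\bdelta\|_2^2 \le \bdelta^\top\mathcal{W}(0,T)\bdelta = \int_0^T \|J_{\bF}(\bp(t))\bdelta\|_2^2\,\dd t.
\]

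\textbf{Step 3 (upper bound and absorption).} Write $\|J_{\bF}(\bp(t))\bdelta\|_2^2 = \langle J_{\bF}(\bp(t))\bdelta,\ \Delta\bF(t) - \mathbf{e}(t)\rangle$. Then Cauchy--Schwarz and \ref{T2} give
\[
  \|J_{\bF}(\bp(t))\bdelta\|_2^2 \le L\|\bdelta\|_2\bigl(\|\Delta\bF(t)\|_2 + \tfrac{\Lambda}{2}\|\bdelta\|_2^2\bigr).
\]
Integrating over $[0,T]$, combining with Step 2 and dividing by $\|\bdelta\|_2$ yields
\[
  \mu_T\|\bdelta\|_2 \le L\int_0^T\|\Delta\bF(t)\|_2\,\dd t + \tfrac{\Lambda L T}{2}\|\bdelta\|_2^2.
\]
By \ref{T5}, $\Lambda L T\|\bdelta\|_2 \le \mu_T$, so the last term is at most $\tfrac{\mu_T}{2}\|\bdelta\|_2$. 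Moving it to the left side gives $\tfrac{\mu_T}{2}\|\bdelta\|_2 \le L\int_0^T\|\Delta\bF\|_2\,\dd t$, which is exactly \eqref{eq:dynamic_bound}.

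The final claim is then immediate: if $\Delta\bF \equiv \mathbf{0}$, the right-hand side of \eqref{eq:dynamic_bound} vanishes, so $\bp(0) = \bq(0)$.
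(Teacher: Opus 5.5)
Your proof is correct and is essentially the paper's argument. It uses the same integral-remainder expansion with $\|\mathbf{e}(t)\|_2\le\tfrac{\Lambda}{2}\|\bdelta_0\|_2^2$, the Gramian lower bound from (T4), and absorption of the quadratic term via (T5); the only cosmetic difference is that you test against $\bdelta_0$ through $\bdelta_0^\top\mathcal{W}(0,T)\bdelta_0$, where the paper bounds $\|\mathcal{W}(0,T)\bdelta_0\|_2$ from below. Your caveat is also well taken: (T3) must hold at the intermediate points $\bq(t)+s\bdelta_0$, and those segments must lie in $\M$. The paper's proof uses this silently, even though (T3) is stated only for points on the trajectory.
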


\begin{proof}
Since $\dot{\bp}(t) = \dot{\bq}(t)$, condition \ref{T1} gives $\bq(t) = \bp(t) - \bdelta_0$ for all $t$.

\medskip
\noindent\textbf{Step 1 --- Taylor expansion with integral remainder.}
By the fundamental theorem of calculus,
\begin{equation}
  \bF(\bp(t)) - \bF(\bq(t))
  =   \int_0^1 J_{\bF}\!\bigl(\bp(t) - s\bdelta_0\bigr)  \bdelta_0\dd s
  =: J_{\bF}(\bp(t))\bdelta_0 + \mathbf{r}(t),
  \label{eq:taylor}
\end{equation}
where $\|\mathbf{r}(t)\|_2 \leq \tfrac{\Lambda}{2}\|\bdelta_0\|_2^2$ by \ref{T3}.

\medskip
\noindent\textbf{Step 2 --- Projection onto the Gramian.}
Multiplying \eqref{eq:taylor} by $J_{\bF}(\bp(t))^\top$ and integrating over $[0,T]$:
\begin{equation}
  \mathcal{W}(0,T)\bdelta_0
  =  \int_0^T J_{\bF}(\bp(t))^\top
\bigl[\bF(\bp(t)) - \bF(\bq(t))\bigr]\dd t
  -  \int_0^T J_{\bF}(\bp(t))^\top \mathbf{r}(t)\dd t.
\end{equation}
Taking norms and using \ref{T2} and \ref{T4}:
\begin{equation}
  \mu_T\|\bdelta_0\|_2
  \leq  L\int_0^T \bigl\|\bF(\bp(t)) - \bF(\bq(t))\bigr\|_2\dd t
  +  \frac{\Lambda L T}{2}\|\bdelta_0\|_2^2.
  \label{eq:norm_ineq}
\end{equation}

\medskip
\noindent\textbf{Step 3 --- Absorbing the quadratic term.}
By \ref{T5}, $\tfrac{\Lambda LT}{2}\|\bdelta_0\|_2 \leq \tfrac{\mu_T}{2}$.
Rearranging \eqref{eq:norm_ineq} gives \eqref{eq:dynamic_bound}.
\end{proof}

\begin{remark}[Information accumulation]
\label{rem:accumulation}
The bound \eqref{eq:dynamic_bound} improves with time in two ways: $\mu_T$ is non-decreasing in $T$ (more movement tightens the Gramian),
and the right-hand side averages descriptor discrepancy over $[0,T]$, suppressing sensor noise.
A longer, more informative trajectory thus yields a tighter and more noise-robust localization estimate.
\end{remark}

\subsection{Generic Resolution of Static Ambiguity}
\label{sec:symmetry}

\begin{proposition}[Generic observability]
\label{prop:generic}
For a generic map $\M$ and a generic $\mathcal{C}^1$ trajectory, $\lambda_{\min}(\mathcal{W}(0,T)) > 0$ for every $T > 0$.
\end{proposition}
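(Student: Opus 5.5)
The plan is to reduce positivity of $\lambda_{\min}(\mathcal{W}(0,T))$ to a pointwise rank condition and then show that this condition is generic. By property (iii) of the Gramian, a unit vector $\mathbf{u}$ lies in $\ker\mathcal{W}(0,T)$ if and only if $J_{\bF}(\bp(t))\mathbf{u}=\mathbf{0}$ for a.e.\ $t\in[0,T]$. If $J_{\bF}\in\mathcal{C}^0$ along the trajectory, this holds for every $t$. It therefore suffices to show that, generically, $J_{\bF}(\bp(t))$ has full column rank $d$ for at least one $t\in[0,T]$. Equivalently, $\mu(\bp(t))=\sigma_{\min}(J_{\bF}(\bp(t)))>0$ for some $t$ in every interval $[0,T]$; in fact we will aim for almost every $t$. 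Continuity of $t\mapsto J_{\bF}(\bp(t))^{\!\top}J_{\bF}(\bp(t))$ then gives $\mathcal{W}(0,T)\succeq c\,\tau I_d$ on a small subinterval of length $\tau$, so $\mathcal{W}(0,T)\succ 0$.

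First, I will fix what ``generic'' means: an open dense (or residual) subset in the $\mathcal{C}^2$ topology of smooth surfaces $\mathcal{S}$ bounding $\M$, together with an open dense set of $\mathcal{C}^1$ trajectories. Next, for the range signal I will use the closed form \eqref{eq:range_grad_component}. This gives $J_{\bF}(\bp)=-\int \mathbf{w}(\btheta)\,\mathbf{n}(\btheta)^{\!\top}/(\mathbf{n}(\btheta)^{\!\top}\btheta)\,\dd\sigma$, a PoU-weighted superposition of the vectors $\mathbf{n}/(\mathbf{n}^{\!\top}\btheta)$. Rank deficiency at $\bp$ means there is a unit $\mathbf{u}$ with $\int w_i(\btheta)\,\mathbf{n}(\btheta)^{\!\top}\mathbf{u}/(\mathbf{n}(\btheta)^{\!\top}\btheta)\,\dd\sigma=0$ for all $i$. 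Since the $w_i$ are linearly independent (Proposition~\ref{prop:norming}) and $n>d$, these are $n$ equations in only $d-1$ unknowns (the direction of $\mathbf{u}$), plus the $d$ coordinates of $\bp$. I will show, via a transversality argument, that the set $\Sigma=\{\bp\in\M:\mu(\bp)=0\}$ is generically a closed set of positive codimension. The argument is Thom's transversality theorem applied to the map $(\bp,\mathbf{u})\mapsto J_{\bF}(\bp)\mathbf{u}$ from $\M\times\Sph^{d-1}$ to $\R^n$, with the surface as the perturbation parameter. Local bumps of $\mathcal{S}$ near a hit point change $\mathbf{n}(\btheta)$ on a small cap of directions. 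Because the $w_i$ are independent, these bumps move $J_{\bF}(\bp)\mathbf{u}$ in $n$ independent directions, which gives the submersion property needed for parametric transversality. The preimage of $\mathbf{0}$ then has dimension $d+(d-1)-n$, so its projection $\Sigma$ has codimension at least $n-d+1\ge 1$ in $\M$.

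Finally, a generic $\mathcal{C}^1$ trajectory is not contained in $\Sigma$ on any interval. When $\Sigma$ has codimension at least $2$, a generic curve misses it entirely. When $\Sigma$ has codimension $1$, a generic curve meets it transversally at isolated times. In either case $\{t:\mu(\bp(t))>0\}$ is open and dense in $[0,T]$, which completes the argument above.

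The main obstacle is the transversality step: proving that surface perturbations act as a submersion on $J_{\bF}(\bp)\mathbf{u}$. This requires controlling occlusion, since the hit map $\btheta\mapsto\mathbf{x}(\bp,\btheta)$ is discontinuous at occlusion boundaries, and it requires avoiding grazing directions. My first attempt would be to localize the perturbation to a patch of $\mathcal{S}$ that is seen non-grazingly and unoccluded from a neighborhood of $\bp$; such a patch exists generically for bounded $\M$. I would then use the fact that the derivative of $\mathbf{n}/(\mathbf{n}^{\!\top}\btheta)$ with respect to the patch's tilt is nondegenerate. If this proves too delicate, I would fall back on stating genericity in a weaker sense: prove the claim for an open dense set of surfaces in a finite-dimensional analytic family, where $\mu(\bp(t))$ is real-analytic in $t$ and is either identically zero or vanishes only at isolated points, and exclude the identically-zero case by exhibiting one nondegenerate member of the family.
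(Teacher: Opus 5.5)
Your first step is exactly the paper's: $\mathbf u\in\ker\mathcal{W}(0,T)$ iff $J_{\bF}(\bp(t))\mathbf u=\mathbf 0$ for a.e.\ $t$. After that the two arguments have little in common. The paper simply declares the common-kernel condition non-generic. Remark~\ref{rem:generic} then \emph{defines} a generic pair as one admitting no such $\mathbf u$, so the paper's proof is essentially definitional. The claimed equivalence with exact translational symmetry and the nowhere-density of the degenerate set are stated, not proved. A definition made on a fixed interval $[0,T]$ also does not really address ``for every $T>0$''. You instead use a standard topological notion of genericity and strengthen the target to full column rank of $J_{\bF}(\bp(t))$ at some time in every $[0,T]$; with continuity, this handles every $T$ at once. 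You then try to prove it by parametric transversality. If completed, this gives a real theorem, and more than the paper claims. Your count $\dim G^{-1}(\mathbf 0)=2d-1-n$ would show that for $n\ge 2d$ (e.g.\ $d=2$, $n=8$ in the paper's example) the rank-deficient set is generically empty. Then $\mu(\bp)>0$ on all of $\M$ and the static bound already applies everywhere.

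The price is that the load-bearing submersion step is only sketched, and it has concrete problems beyond the ones you flag.

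\textbf{Occlusion and regularity.} Formula \eqref{eq:range_grad_component} is not the Jacobian of the range descriptor once there is occlusion. For fixed $\btheta$, $\bp\mapsto r(\bp,\btheta)$ jumps when the ray sweeps past an occluding edge, so condition~\ref{D1} fails. In $d=2$, $\nabla_{\bp}\phi_i$ then picks up boundary terms $(r_--r_+)\,w_i(\theta^*)\,\nabla_{\bp}\theta^*$ from the moving shadow boundaries. $J_{\bF}$ is also discontinuous across visual-event loci. Near grazing rays its $\bp$-derivative cannot be obtained by differentiating under the integral, since the integrand behaves like $|\theta-\theta^*|^{-3/2}$. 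So $G(\bp,\mathbf u)=J_{\bF}(\bp)\mathbf u$ is not $\mathcal{C}^1$ on $\M\times\Sph^{d-1}$; the paper proves only first-order differentiability of $\bF$. Thom's theorem can at best be run cell by cell.

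\textbf{Surjectivity.} Independence of the $w_i$ on $\Sph^{d-1}$ does not by itself give surjectivity. A compactly supported bump $\eta$ is not a tilt, because the normal change $-\nabla_S\eta$ has zero mean. Its first-order effect on $(J_{\bF}(\bp)\mathbf u)_i$ is $\int\eta\,\mathcal{L}w_i$ for a first-order operator $\mathcal{L}$ that depends on $\mathbf u$ and the patch. You therefore need the $\mathcal{L}w_i$ to be independent on the admissible (visible, unoccluded, non-grazing) patches. This relies on the real-analyticity of the softmax weights. Because $\ker\mathcal{L}$ can meet $\mathrm{span}\{w_i\}$, it may also require bumps on more than one patch.

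Two simplifications are available. First, the last step needs no transversal crossings, only that $\Sigma$ is closed with empty interior: then the $\mathcal{C}^1$ curves for which $\{t:\bp(t)\in\Sigma\}$ is nowhere dense form a residual set. Second, your fallback should be run on $\det(J_{\bF}^{\top}J_{\bF})$ as a function of $\bp$ on each visibility cell, not on $\mu(\bp(t))$ in $t$. The reason is that $\sigma_{\min}$ is not analytic, and a generic $\mathcal{C}^1$ trajectory is not analytic either.
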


\begin{proof}
$\lambda_{\min}(\mathcal{W}(0,T)) = 0$ iff there exists a unit vector $\mathbf{u} \in \R^d$ with $J_{\bF}(\bp(t))\mathbf{u} = \mathbf{0}$ for $\lambda$-a.e.\ $t$.
By \eqref{eq:grad_component}, this requires 
$\int \nabla_{\!\bp}f_{\bp(t)}(\btheta) \cdot \mathbf{u} w_i(\btheta)\dd\sigma = 0$ 
for all $i$ and a.e.\ $t$:
the directional derivative of $f_{\bp}$ in direction $\mathbf{u}$ must be annihilated by every weight $w_i$ simultaneously.
This is a non-generic condition that fails whenever $\M$ has no exact translational symmetry in direction $\mathbf{u}$.
\end{proof}

\begin{remark}[Meaning of ``generic'']
\label{rem:generic}
Fix a direction $\mathbf{u}\in\Sph^{d-1}$. Call the pair $(\M,\{\bp(t)\})$ \emph{$\mathbf{u}$-degenerate} if
\[
  J_\bF(\bp(t))\,\mathbf{u} = \mathbf 0 \qquad\text{for $\lambda$-a.e. } t\in[0,T],
\]
which, by \eqref{eq:grad_component}, is equivalent to the directional derivative of $f_{\bp(t)}$ along $\mathbf u$ being annihilated by every weight $w_i$ simultaneously, i.e.\ to $\M$ possessing an exact translational symmetry in direction $\mathbf u$ along the trajectory. For fixed $\mathbf u$, the set of maps $\M$ that are $\mathbf u$-degenerate has empty interior in the $\mathcal C^1$ topology on maps: an arbitrarily small perturbation destroys an exact symmetry, so this set is nowhere dense, and it has Lebesgue measure zero within any finite-dimensional parametric family of maps. We say the pair $(\M,\{\bp(t)\})$ is \emph{generic} if it is not $\mathbf u$-degenerate for any $\mathbf u\in\Sph^{d-1}$, i.e.\ if it avoids this nowhere-dense (measure-zero) exceptional set for every direction. Proposition~\ref{prop:generic} asserts that $\lambda_{\min}(\mathcal W(0,T))>0$ precisely for such generic pairs.
\end{remark}

\begin{remark}[Counting constraints]
\label{rem:counting}
Dynamic ambiguity requires $J_{\bF}(\bp(t))\mathbf{u} = \mathbf{0}$
for a.e.\ $t$: $nd$ scalar equations holding continuously in $t$.
For $d = 2$, $n \geq 2$ this is $2n \geq 4$ independent constraints on the $2$-dimensional ambiguity direction $\mathbf{u}$, generically yielding no solution.
\end{remark}

\subsection{Comparison and Practical Implications}

\begin{table}[h]
\centering
\caption{Static vs.\ dynamic localization.}
\label{tab:loc_summary}
\renewcommand{\arraystretch}{1.5}
\begin{tabular}{lll}
\toprule
& \textbf{Static} & \textbf{Dynamic} \\
\midrule
Observation
  & $\bF_{\mathrm{obs}}$
  & $\{\bF_{\mathrm{obs}}(t),\dot{\bF}_{\mathrm{obs}}(t)\}_{t\in[0,T]}$ \\
Discriminability
  & $\mu(\bp^*) > 0$
  & $\lambda_{\min}(\mathcal{W}(0,T)) > 0$ \\
Error bound
  & $(Lh+\eta)/\mu(\bp^*)$
  & $2L\|\text{noise}\|_{L^1[0,T]}/\mu_T$ \\
Failure condition
  & $\mu(\bp^*) = 0$ (symmetric env.)
  & $J_{\bF}\mathbf{u}=0$ a.e.\ (generically never) \\
Improves with
  & $n$ (more directions)
  & $T$ (longer trajectory) \\
Noise rejection
  & Pointwise
  & $L^1[0,T]$ averaging \\
\bottomrule
\end{tabular}
\end{table}

\noindent
In practice we recommend the following three-stage pipeline.
\begin{enumerate}[label=\textup{(\roman*)}]
  \item \textbf{Coarse initialization} via static matching \eqref{eq:static_loc} on a coarse grid.
  \item \textbf{Online refinement}: as the robot moves, monitor $\lambda_{\min}(\mathcal{W}(0,t))$; once it exceeds a threshold
$\mu_{\min}$, apply \eqref{eq:dynamic_bound} to refine the position estimate.
  \item \textbf{Active exploration}: if $\lambda_{\min}(\mathcal{W}(0,t))$ stagnates, steer toward the direction that maximizes its growth rate,
\[
  \frac{\dd}{\dd t}\lambda_{\min}(\mathcal{W}(0,t))
  =  \mathbf{u}_{\min}^\top  J_{\bF}(\bp(t))^{\!\top} J_{\bF}(\bp(t))  \mathbf{u}_{\min},
\]
where $\mathbf{u}_{\min}$ is the eigenvector of $\mathcal{W}$ corresponding to $\lambda_{\min}$.
\end{enumerate}
\section{Illustrative Example: Descriptor Computation and Localization on a 2D Map}
\label{sec:example}

We illustrate the descriptor construction of \cref{sec:formulation,sec:pou} and the static localization pipeline of \cref{sec:static_loc} on a concrete $2$-dimensional map, using an exact ray-casting sensor model, $n=8$ directions, and the softmax kernel with $\varepsilon=0.5$. All angles are in radians.

\subsection{Setup}

Consider a room $\M=[0,6]\times[0,4]$ with an internal wall segment at $x=3$, $y\in[1,3]$, and a pillar $[4.5,5]\times[1.5,2.5]$ (\Cref{fig:map}). The environment signal is the range function $f_\bp(\theta)=r(\bp,\theta)$: the distance from $\bp$ to the nearest obstacle in direction $\theta$, computed by exact ray-obstacle intersection.
The directional basis is $\theta_i=(i-1)\pi/4$, $i=1,\ldots,8$, with softmax weights \eqref{eq:softmax_weight}.

\begin{figure}[h]
\centering
\begin{tikzpicture}[scale=0.9]
  \draw[thick] (0,0) rectangle (6,4);
  \draw[thick, line width=1.4pt] (3,1) -- (3,3);
  \fill[gray!35] (4.5,1.5) rectangle (5,2.5);
  \draw[thick, gray!50!black] (4.5,1.5) rectangle (5,2.5);
  \foreach \x in {0.5,1.0,...,2.5} \foreach \y in {0.5,1.0,...,3.5}
    \fill[gray!45] (\x,\y) circle (1pt);
  \fill[black] (1.3,1.7) circle (2.5pt);
  \node[anchor=north west] at (1.3,1.45) {$\bp^\star=(1.3,1.7)$};
\end{tikzpicture}
\caption{Map with internal wall (thick segment) and pillar (shaded square). Small dots: descriptor grid $\mathcal{G}_h$ ($h=0.5$, $35$ nodes). Black dot: true query position $\bp^\star$, deliberately off-grid.}
\label{fig:map}
\end{figure}

\subsection{Convergence of the Quadrature}

The descriptor components are approximated by the discrete sum
\begin{equation}
  \hat\phi_i(\bp) = \frac{2\pi}{S}\sum_{s=0}^{S-1} r(\bp,\theta_s)w_i(\theta_s),
  \qquad \theta_s = \frac{2\pi s}{S},
  \label{eq:quad_sum}
\end{equation}
converging to \eqref{eq:component} as $S\to\infty$; the factor $2\pi/S$ is the width of each quadrature cell on $[0,2\pi)$, consistent with the definition of $\phi_i$ in \eqref{eq:component} as an unnormalized integral over $\Sph^1$. \Cref{tab:convergence} reports this convergence at $\bp=(1.5,2.0)$.

\begin{table}[h]
\centering
\caption{Convergence of $\hat\phi_i(\bp)$ as $S\to\infty$ (three representative components; error measured against $S=16384$).}
\label{tab:convergence}
\begin{tabular}{crrrr}
\toprule
$S$ & $\hat\phi_1$ & $\hat\phi_2$ & $\hat\phi_5$ & error vs.\ $S=16384$ \\
\midrule
$64$    & $1.6367$ & $1.7002$ & $1.4426$ & $5.4\times10^{-2}$ \\
$256$   & $1.5948$ & $1.6669$ & $1.4419$ & $1.2\times10^{-2}$ \\
$1024$  & $1.5850$ & $1.6589$ & $1.4415$ & $2.2\times10^{-3}$ \\
$4096$  & $1.5826$ & $1.6570$ & $1.4414$ & $2.0\times10^{-4}$ \\
$16384$ & $1.5828$ & $1.6571$ & $1.4414$ & --- (reference) \\
\bottomrule
\end{tabular}
\end{table}

Convergence is $O(1/S)$ on average rather than spectral, because $r(\bp,\cdot)$ has jump discontinuities: at $\theta=0.588$ rad the ray from $\bp$ grazes the top edge of the internal wall ($y=3$), and $r$ jumps from $1.80$ to $3.61$ (a symmetric jump occurs at $\theta=2\pi-0.588$, the bottom edge $y=1$). These shadow-boundary singularities are averaged, but not removed, by the smooth PoU weights, which caps the quadrature order at $O(1/S)$. The observed error ratio between successive doublings of $S$ fluctuates around the value $2$ predicted by $O(1/S)$ (empirically between $1.0$ and $3.7$ across the range $S=64,\ldots,16384$) rather than decreasing monotonically, because the fixed discontinuity locations $\theta=0.588,\,2\pi-0.588$ land at varying distances from the nearest quadrature node as $S$ changes. For the remainder of this example we use $S=1024$, accurate to $\approx2\times10^{-3}$ per component.

\subsection{Localization at an Off-Grid Query Point}

A sensor at unknown position $\bp^\star=(1.3,1.7)$ (not a grid node; see \Cref{fig:map}) returns the observed descriptor $\bF_{\mathrm{obs}}=\bF(\bp^\star)$. The localizer has access only to $\bF_{\mathrm{obs}}$ and the precomputed grid $\{(\bp_k,\bF(\bp_k))\}_{k=1}^{35}$; it does not know $\bp^\star$.

\paragraph{Stage 1: nearest-neighbor search \eqref{eq:static_loc}.} \Cref{tab:nn} lists the four nearest grid nodes.

\begin{table}[h]
\centering
\caption{Top-4 nearest grid nodes to $\bF_{\mathrm{obs}}$.}
\label{tab:nn}
\begin{tabular}{cccc}
\toprule
Rank & $\bp_k$ & $\|\bF(\bp_k)-\bF_{\mathrm{obs}}\|_2$ & true error $\|\bp_k-\bp^\star\|_2$ \\
\midrule
1 & $(1.5,1.5)$ & $0.2869$ & $0.283$ \\
2 & $(1.5,2.0)$ & $0.3839$ & $0.361$ \\
3 & $(1.0,2.0)$ & $0.4709$ & $0.424$ \\
4 & $(1.0,1.5)$ & $0.4757$ & $0.361$ \\
\bottomrule
\end{tabular}
\end{table}

The nearest node $\hat\bp^{(0)}=(1.5,1.5)$ localizes $\bp^\star$ to within one grid cell, consistent with the static error bound \eqref{eq:static_error}; grid resolution limits stage 1 to $O(h)$ accuracy.

\paragraph{Stage 2: Gauss--Newton refinement~\cite{NocedalWright2006}.} Using the Jacobian $J_\bF(\bp)\in\R^{8\times2}$ (central differences, $h=0.05$), iterate
\begin{equation}
  \bp^{(k+1)} = \bp^{(k)} + \bigl(J_\bF^\top J_\bF\bigr)^{-1} J_\bF^\top\bigl(\bF_{\mathrm{obs}}-\bF(\bp^{(k)})\bigr),
  \label{eq:gn_iterate}
\end{equation}
evaluated at $\bp^{(k)}$.
\Cref{tab:gn} shows rapid convergence.

\begin{table}[h]
\centering
\caption{Gauss--Newton refinement from $\hat\bp^{(0)}=(1.5,1.5)$.}
\label{tab:gn}
\begin{tabular}{clcc}
\toprule
Iter. & Position estimate & $\|\bF(\bp^{(k)})-\bF_{\mathrm{obs}}\|_2$ & error to $\bp^\star$ \\
\midrule
$0$ & $(1.500,1.500)$ & $0.2869$ & $0.2828$ \\
$1$ & $(1.289,1.694)$ & $0.0165$ & $0.0124$ \\
$2$ & $(1.301,1.700)$ & $0.0008$ & $0.0007$ \\
$3$ & $(1.300,1.700)$ & $0.0002$ & $0.0001$ \\
\bottomrule
\end{tabular}
\end{table}

Three Gauss--Newton steps reduce the localization error from $0.283$ (grid resolution) to $10^{-4}$, recovering $\bp^\star=(1.3,1.7)$ to numerical precision; the residual $\|\bF(\bp^{(k)})-\bF_{\mathrm{obs}}\|_2$ decreases monotonically at each step, as expected from a well-posed Gauss--Newton iteration.

\paragraph{Certificate of well-posedness.} At $\hat\bp^{(0)}$, the Jacobian singular values are $\sigma=(1.452,0.845)$, so the local injectivity radius \eqref{eq:local_injectivity} is
\[
  \mu(\hat\bp^{(0)}) = \sigma_{\min}(J_\bF) = 0.845 > 0,
\]
certifying, via Proposition~\ref{prop:static_error}, that $\bF$ is locally injective and the Gauss--Newton iteration is well-conditioned. A vanishing $\mu$ (e.g.\ in a long symmetric corridor, cf.\ \cref{sec:symmetry}) would leave stage 2 unable to resolve position along the flat direction, and only the coarser grid bound of stage 1 would apply.

\subsection{Summary}

\begin{table}[h]
\centering
\begin{tabular}{ll}
\toprule
Quadrature accuracy & $S=1024$ ($\approx2\times10^{-3}$ per component) \\
Grid & $35$ nodes, $h=0.5$ \\
True position & $\bp^\star=(1.3,1.7)$ (off-grid) \\
Stage-1 (nearest-neighbor) error & $0.283$ \\
Stage-2 (Gauss--Newton) error & $1.4\times10^{-4}$ \\
Injectivity certificate & $\mu(\hat\bp^{(0)})=0.845>0$ \\
\bottomrule
\end{tabular}
\end{table}

\noindent
In summary, localization proceeds in two stages: a coarse nearest-neighbor search on the precomputed grid, accurate to $O(h)$, followed by Gauss--Newton refinement using the descriptor Jacobian, accurate to numerical precision once inside the basin of convergence, and certified throughout by the local injectivity radius $\mu>0$.

\section{Acknowledgments*}

\bibliographystyle{unsrt}
\bibliography{references}

\end{document}